\documentclass{article}
    
    \usepackage[margin=0.7in]{geometry}
    \usepackage[parfill]{parskip}
    \usepackage[utf8]{inputenc}
    
    \usepackage{amsmath,amssymb,amsfonts,amsthm}
\usepackage{thmtools} 
\usepackage{cleveref}
\usepackage{enumitem}

\usepackage{graphicx}
\usepackage{color}

\theoremstyle{plain}
\newtheorem{theo}{Theorem}[section]
\newtheorem{prop}[theo]{Proposition}
\newtheorem{lemma}[theo]{Lemma}
\newtheorem{cor}[theo]{Corollary}

\theoremstyle{definition}

\newcommand{\mc}[1]{\mathcal{#1}}
\newcommand{\mb}[1]{\mathbb{#1}}

\newcommand{\sub}{\subseteq}

\newcommand{\lra}{\leftrightarrow}

\newcommand{\sm}{\setminus}
\newcommand{\ov}{\overline}

\newcommand{\eps}{\varepsilon}
\newcommand{\es}{\emptyset}

\newcommand{\da}{\downarrow}

\newcommand{\lL}{\lambda}

\newcommand{\cE}{\mathcal E}
\newcommand{\OO}{\Omega}

\newcommand{\on}{\operatorname}

\title{On Kahn's flow conjecture}
\author{Peter Keevash\thanks{Mathematical Institute, University of Oxford, UK. Supported by ERC Advanced Grant 883810.}}
\date{}

\begin{document}

\maketitle

\begin{abstract}
We prove Kahn's flow conjecture, which is a strong form of Chv\'atal's conjecture 
on the largest intersecting subfamily of a downset.
\end{abstract}

\section{Introduction}

A family $\mc D\sub \OO:=2^{[n]}$ of subsets of $[n]=\{1,\dots,n\}$ 
is a \emph{downset} if it is closed under taking subsets.  For $i\in[n]$ write $\mc D(i)=\{A\in\mc D:i\in A\}$.  
Chv\'atal conjectured \cite{Chvatal} in 1972 that every intersecting $\mc I\sub\mc D$ satisfies $|\mc I|\le\max_i|\mc D(i)|$.  
After many decades of research, this was recently proved by Chang, Liu and Liu \cite{CLL}.
A more general conjecture of Kleitman \cite{Kleitman}, also proved in \cite{CLL},
states that for any non-negative non-increasing weight function $w$ on $\OO$,
every intersecting $\mc I\sub\mc D$ satisfies $w(\mc I) \le\max_i w(\mc D(i))$.
A flow formulation, shown to be equivalent by Fishburn \cite{F},
is that for any maximal intersecting $\mc B$ there are $c_i\ge0$ with $\sum_i c_i=1$ 
such that $\sum_i c_i x_i$ flows upward to $1_{\mc B}$.
Here, given nonnegative functions $p,q$ on $\OO$, we say that $p$ \emph{flows upward to} $q$
if there are $\beta(x,y)\ge0$, supported on pairs $x\sub y$, 
such that $\sum_y \beta(x,y)=p(x)$ and  $\sum_x \beta(x,y)=q(y)$.
 
Kahn's conjecture \cite[Conjecture~3.6]{FKKK} strengthens this flow formulation as follows.
Let $h:\OO\to\mb R$ be increasing and antipodal, meaning that $h(\ov x)=-h(x)$, and write $h_+=\max(h,0)$.
Consider the Fourier coefficients $\widehat h(S)=\mb E[h(X)\chi_S(X)]$, where $X$ is uniform on $\OO$ and $\chi_S(x)=(-1)^{|S\cap x|}$.
For each non-empty $T \sub [n]$ suppose $\lambda_T(i)\ge0$ for $i\in T$ with $\sum_{i\in T}\lambda_T(i)=\widehat h(T)^2$.
Let $\lambda_i=\sum_{T\ni i}\lambda_T(i)$ and $L_\lambda(x)=\sum_i\lambda_i x_i$.

\begin{theo}[Kahn's flow conjecture]\label{thm:kahn}
For every $h$ and $\lambda_T(i)$ as above, $L_\lambda$ flows upward to $h_+^2$.
\end{theo}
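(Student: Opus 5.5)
By max-flow/min-cut (equivalently, Hall's theorem for the bipartite containment graph), a nonnegative $p$ flows upward to $q$ if and only if two conditions hold. First, $\sum p=\sum q$. Second, $q(\mc D)\le p(\mc D)$ for every downset $\mc D\sub\OO$, since mass arriving in a downset must start in it. I would therefore reduce Theorem~\ref{thm:kahn} to these two inequalities, normalising everything by the uniform measure $\mu$.

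\emph{Totals.} Antipodality gives $\widehat h(\es)=\mb E h=0$. Hence
\[
\sum_i\lambda_i=\sum_{T\ne\es}\widehat h(T)^2=\mb E[h^2]
\]
by Parseval, so $\mb E L_\lambda=\tfrac12\sum_i\lambda_i=\tfrac12\mb E h^2$. Also $h_+^2(x)+h_+^2(\ov x)=h(x)^2$, so $\mb E h_+^2=\tfrac12\mb E h^2$. The totals therefore agree.

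\emph{Cut condition.} Fix a downset $\mc D$. Since $\lambda_T(\cdot)$ is an arbitrary distribution of $\widehat h(T)^2$ over $T$, the worst case puts all of it on the $i\in T$ minimising $\mu(\mc D(i))$. So it suffices to prove
\begin{equation}\label{eq:cut}
\mb E\bigl[h_+^2 1_{\mc D}\bigr]\le \sum_{T\ne\es}\widehat h(T)^2\min_{i\in T}\mu(\mc D(i)).
\end{equation}
The key structural observation is that $\mc P=\{x:h(x)>0\}$ is intersecting. Indeed, if $h(x)>0$ and $y\cap x=\es$, then $y\sub\ov x$, so monotonicity and antipodality give $h(y)\le h(\ov x)=-h(x)<0$. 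Hence the left side of \eqref{eq:cut} is $\mb E[h^2 1_{\mc A}]$ with $\mc A=\mc D\cap\mc P$ an intersecting subfamily of $\mc D$. Thus \eqref{eq:cut} is a weighted, Fourier-graded version of Chv\'atal's inequality, which fits Kahn's conjecture being a strengthening of it. Taking $h$ to be $\pm1$-valued recovers a form of the Kleitman/Chv\'atal statement.

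To prove \eqref{eq:cut} I would follow the strategy of Chang, Liu and Liu \cite{CLL}, carrying the weight $h^2$ and the Fourier grading through their argument. The plan is induction on $n$, splitting on a coordinate $i$. Write $h=h_0+x_i'h_1$, where $x_i'=\pm1$ records membership of $i$; here $h_1=\partial_i h\ge0$ by monotonicity, and $h_0$, $h_1$ are functions of the other coordinates. Then $\sum_{T\ni i}\widehat h(T)^2=\mb E h_1^2$ and $\sum_{T\not\ni i}\widehat h(T)^2=\mb E h_0^2$. Correspondingly, $\mc D$ splits into the link $\mc D_1=\{A:A\cup\{i\}\in\mc D\}$ and the deletion $\mc D_0$, with $\mc D_1\sub\mc D_0$ both downsets. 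The $T\ni i$ terms should be charged to $\mu(\mc D(i))$ using $h_1$, and the $T\not\ni i$ terms handled by induction applied to suitable antipodal increasing functions on $[n]\sm\{i\}$. The two natural candidates are $h_0$, which is antipodal on the smaller cube, and the restrictions $h(\cdot,x_i=1)$ and $h(\cdot,x_i=0)$ re-symmetrised.

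The main obstacle is exactly this step. One must choose the splitting coordinate, or average over $i$ with weights, so that the cross terms $2h_0h_1$ arising when expanding $h_+^2$ on $\mc D_0$ versus $\mc D_1$ are controlled. One must also ensure the minimum over $i\in T$ survives the induction rather than degrading to an average. If a direct induction fails, the fallback is a compression/shifting argument: show that \eqref{eq:cut} is preserved when $\mc D$ is replaced by its $ij$-compression and $h$ is suitably symmetrised. This would reduce to shifted downsets, where $\mu(\mc D(i))$ is monotone in $i$, so $\min_{i\in T}$ is attained at $\max T$. In that case the right side becomes an explicit level-type sum that can be compared with the left side term by term.
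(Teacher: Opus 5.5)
Your preliminary reductions are correct and match the paper's: flow--cut duality (Lemma~\ref{lem:flow-cut}), equality of totals via Parseval and $h_+(x)^2+h_+(\ov x)^2=h(x)^2$, and the fact that $\{h>0\}$ is intersecting. Concentrating each $\lambda_T$ on a minimiser also correctly reduces everything to the cut inequality $\mb E[h_+^2 1_{\mc D}]\le\sum_{T\ne\es}\widehat h(T)^2\min_{i\in T}\mu(\mc D(i))$.

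\textbf{The gap.} That cut inequality is the entire theorem; for $h=2\cdot 1_{\mc B}-1$ it already implies Chv\'atal's conjecture. You do not prove it. You give a plan whose key step you leave open yourself, and both routes you suggest run into the classical obstructions.
\begin{itemize}
\item \emph{Induction on a coordinate $i$.} The sets of $\{h>0\}$ containing $i$, with $i$ deleted, need not form an intersecting family. Likewise $h$ restricted to $x_i=1$ is not antipodal. So the inductive hypothesis has nothing to act on for that part. Passing to $h_0$ keeps only the $T\not\ni i$ mass, and $\mb E[h_+^21_{\mc D}]$ does not split along $h=h_0+x_i'h_1$; this is the cross-term problem you name. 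As you note, $\min_{i\in T}$ also does not survive the split.
\item \emph{Compression.} $C_{ij}$ raises $\mu(\mc D(i))$ and lowers $\mu(\mc D(j))$, so the right side can move in either direction. You give no symmetrisation of $h$ that makes the cut inequality monotone under compression.
\end{itemize}
Separately, the argument of \cite{CLL} is not an induction on $n$. As the paper recasts it, it is a rank computation for Fourier restriction maps in the monomial basis (Proposition~\ref{prop:endpoints}).

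\textbf{The missing idea.} This is Theorem~\ref{thm:weighted-corr-boolean}. For Boolean increasing $f=1_{\OO\sm\mc D}$ it proves the stronger bound $\on{Cov}(f,h_+^2)\ge\frac14\sum_T\widehat h(T)^2\cE_T^{(2)}(f)$. This implies your cut inequality, because $\cE_T^{(2)}(f)\ge\max_{i\in T}\on{Inf}_i[f]$ and $\mu(\mc D(i))=\frac12\mu(\mc D)-\frac14\on{Inf}_i[f]$. Replacing $\min_{i\in T}$ by $\cE_T^{(2)}$ turns the deficit into an exact quadratic quantity. The proof then runs as follows.
\begin{itemize}
\item Take $\mc A\supseteq\{h>0\}$ maximal intersecting. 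The deficit equals $\|K_{\mc E\sm\mc F,\mc O\sm\mc F}\|_F^2-\sum_{x\in\mc A\sm\mc F}h_+(x)^2$, where $K_{S,T}=2\widehat{h_+}(S\triangle T)$.
\item $K$ factors as $V_{\mc E}^{\mathsf T}DV_{\mc O}$ with $V_{\mc E},V_{\mc O}$ orthogonal. Hence that block is the matrix of the compressed operator $L=P_UD|_V$.
\item The restricted monomials $m_x$, $x\in\mc A\sm\mc F$, lie in $U\cap V$. They form a flag supported where $D\ge h_+(x_k)$, so the max--min formula gives $s_k(L)\ge h_+(x_k)$, and summing squares finishes the proof.
\end{itemize}
Your plan contains no mechanism, this one or any other, that actually establishes the cut inequality.
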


To obtain Kleitman's conjecture, let $h=2\,1_{\mc B}-1$, so $h_+^2=1_{\mc B}$ 
and $\sum_i \lambda_i = \sum_T \widehat h(T)^2 = 1$ by Parseval.
We will motivate our proof of Theorem \ref{thm:kahn} by first giving 
another perspective on the proof of Chv\'atal's conjecture,
also taking the opportunity to sharpen the analytic bounds of \cite{CLL} 
and thus derive some properties of equality cases and stability.
We consider the following equivalent correlation formulation
proposed by Friedgut, Kahn, Kalai and Keller \cite{FKKK}:
if $f,g:\OO\to\{0,1\}$ are increasing and $g$ is antipodal, 
now meaning that $g(x)+g(\ov x)=1$, then $\on{Cov}(f,g)\ge\frac14\min_i\on{Inf}_i[f]$,
where we consider influences $\on{Inf}_i[f]=\mb P(f(X)\ne f(X\triangle\{i\}))$.
To see that this implies Chv\'atal's conjecture, consider any intersecting $\mc I\sub\mc D$, 
and extend $\mc I$ to a maximal intersecting family $\mc B$, which is necessarily increasing and antipodal.
Let $f=1-1_{\mc D}$ and $g=1_{\mc B}$.  Then $\on{Cov}(f,g)=N^{-1}(|\mc D|/2-|\mc D\cap\mc B|)$ 
and  $\on{Inf}_i[f]=2N^{-1}(|\mc D|-2|\mc D(i)|)$, where $N=2^n$. Thus the bound on $\on{Cov}(f,g)$
is exactly the desired bound $|\mc D\cap\mc B|\le\max_i|\mc D(i)|$.

We write  $\cE_T^{(p)}(f)=\mb E|f(X)-f(X\triangle T)|^p$ for $T \sub [n]$ 
and strengthen the required inequality via the estimates
\[ \frac14\min_i\on{Inf}_i[f] = \sum_{T\ne\es} \widehat g(T)^2 \min_i\on{Inf}_i[f]
\le \sum_{T\ne\es} \widehat g(T)^2 \min_{i \in T} \on{Inf}_i[f] \le \mc E_g(f) := \sum_{T\ne\es}\widehat g(T)^2\cE_T^{(2)}(f). \]
Thus it suffices to show $\on{Cov}(f,g)\ge \mc E_g(f)$.
This formulation has not given much away,
as the extremal $g$ has Fourier support on a singleton,
and its advantage is a nice Fourier formula
with a geometric interpretation 
in terms of the two subspace theory of Halmos \cite{Halmos}.
We consider the subspaces $U$ of functions supported on $\mc F=\{f=1\}$ 
and $V$ of functions with Fourier support on $\mc G=\{g=1\}$.
We let $P,Q$ be the orthogonal projections onto $U,V$ and $C=PQP|_U$.
We will show that $\mc E_g(f) = 2N^{-1} \on{Tr}(C-C^2)$.
The arguments in \cite{CLL} can be interpreted as computing 
the dimensions of $0$- and $1$-eigenspaces of $C$,
and Chv\'atal's conjecture comes down to a dimension calculation.
For Kahn's conjecture we employ a more sophisticated version
of this calculation replacing dimension counting 
by singular value estimates
for an appropriate weighted operator.

\section{Geometry of two subspaces}

We start by formulating Halmos' theory, including a proof for the reader's convenience.
Fix subspaces $U,V\sub\mb R^m$. Let $P,Q$ be orthogonal projections onto $U,V$.
For a unit vector $u\in U$, the angle $\theta$ it makes with $V$ satisfies
$\cos^2 \theta = \|Qu\|^2 = \langle u,Qu\rangle = \langle u,Cu\rangle$, where $C:=PQP|_U$. 
Thus the geometry is described by the spectral theory of $C$, as follows.

\begin{prop}\label{prop:halmos}
The $1$- and $0$-eigenspaces of $C$ are $U \cap V$ and $U \cap V^\perp$.
Let \( U_*= U\cap\bigl((U\cap V)\oplus(U\cap V^\perp)\bigr)^\perp\).
Then $U_*$ is $C$-invariant and has an orthonormal eigenbasis
$u_1,\ldots,u_r$ with \(Cu_j=\lambda_j u_j\),
where each \( \lambda_j \in (0,1) \).
Writing $c_j=\sqrt{\lambda_j}$ and $s_j=\sqrt{1-\lambda_j}$, there are
orthonormal vectors $v_1,\ldots,v_r\in U^\perp$ such that the spaces
$L_j=\operatorname{span}\{u_j,v_j\}$ are pairwise orthogonal and are
preserved by both $P$ and $Q$.  In the basis $(u_j,v_j)$,
\begin{equation}\label{eq:halmos-block}
 P=
 \begin{pmatrix}
  1&0\\
  0&0
 \end{pmatrix},
 \qquad
 Q=
 \begin{pmatrix}
  c_j^2&c_js_j\\
  c_js_j&s_j^2
 \end{pmatrix}.
\end{equation}
Thus the corresponding principal angle $\theta_j\in(0,\pi/2)$ is
determined by $c_j=\cos\theta_j$, or equivalently
$\lambda_j=\cos^2\theta_j$.
\end{prop}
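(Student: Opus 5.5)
The plan is to do everything through the quadratic form $\langle u,Cu\rangle=\|Qu\|^2$ on $U$. Note that $C=PQP|_U$ is self-adjoint and positive semidefinite on $U$, since $\langle u,Cu\rangle=\langle Pu,QPu\rangle=\|Qu\|^2$ for $u\in U$. It also satisfies $0\le C\le I$, because $\|Qu\|\le\|u\|$.

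\emph{Extreme eigenspaces.} For $u\in U$ we have $Cu=u$ iff $\langle u,(I-C)u\rangle=0$, using $I-C\ge0$. This happens iff $\|u\|^2-\|Qu\|^2=\|u-Qu\|^2=0$, i.e.\ iff $u\in V$. Similarly $Cu=0$ iff $\|Qu\|^2=0$, i.e.\ iff $u\in V^\perp$. Hence the $1$- and $0$-eigenspaces are $U\cap V$ and $U\cap V^\perp$. Since $C$ is self-adjoint, the orthogonal complement within $U$ of the sum of these two eigenspaces is $C$-invariant; this complement is exactly $U_*$. The spectral theorem then gives an orthonormal eigenbasis $u_1,\dots,u_r$ of $U_*$, and its eigenvalues avoid $0$ and $1$, so $\lambda_j\in(0,1)$.

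\emph{Construction of $v_j$.} I would set $v_j:=(I-P)Qu_j/\|(I-P)Qu_j\|$. The key computation is the decomposition $Qu_j=PQu_j+(I-P)Qu_j=\lambda_ju_j+(I-P)Qu_j$. From it,
\[\|(I-P)Qu_j\|^2=\|Qu_j\|^2-\lambda_j^2=\lambda_j-\lambda_j^2=c_j^2s_j^2>0,\]
so $v_j\in U^\perp$ is well defined and $Qu_j=c_j^2u_j+c_js_jv_j$. The same expansion gives orthonormality:
\[\langle (I-P)Qu_j,(I-P)Qu_k\rangle=\langle Qu_j,Qu_k\rangle-\langle PQu_j,PQu_k\rangle=\lambda_j\delta_{jk}-\lambda_j\lambda_k\delta_{jk}.\]
This also shows that the $L_j$ are pairwise orthogonal, since $u$'s are orthogonal to $v$'s.

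\emph{Invariance under $P$ and $Q$.} $P$ acts as $\mathrm{diag}(1,0)$ on $L_j$ trivially. For $Q$, the first column is given above, and it remains to compute $Qv_j$. Write $Qu_j=c_j(c_ju_j+s_jv_j)$. The vector $w=c_ju_j+s_jv_j$ is a unit vector with $Qu_j=c_jw$, and $\langle u_j,Qu_j\rangle=c_j^2$. Since $Q$ is a projection, $Q(Qu_j)=Qu_j$, so $c_jQw=c_jw$ and $w\in V$. Then
\[Qv_j=s_j^{-1}(Qw-c_jQu_j)=s_j^{-1}(w-c_j^2w)=s_jw=c_js_ju_j+s_j^2v_j.\]
This yields the claimed matrix, which is the projection onto $\mathrm{span}\{w\}$. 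Consequently $\cos\theta_j=\|Qu_j\|=c_j$, with $\theta_j\in(0,\pi/2)$ because $\lambda_j\in(0,1)$.

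The only real obstacle is the $Qv_j$ computation. Using idempotence through $w\in V$, rather than expanding $Q(I-P)Q$ directly, is the step that makes it clean.
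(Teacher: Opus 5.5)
Your proof is correct and follows essentially the same route as the paper. Your $v_j=(I-P)Qu_j/\|(I-P)Qu_j\|$ is exactly the paper's $(Qu_j-\lambda_ju_j)/(c_js_j)$, orthogonality comes from the same expansion of $\langle Qu_j,Qu_k\rangle$, and your $Qv_j$ computation via $w=c_ju_j+s_jv_j\in V$ repackages the paper's use of $Q^2=Q$. The only cosmetic difference is that you identify the extreme eigenspaces through positivity of $C$ and $I-C$, whereas the paper examines a unit eigenvector directly.
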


\begin{proof}
We note that $C$ is self-adjoint and preserves $U$.
If $u\in U$ is a unit eigenvector with $Cu=\lambda u$ then 
\(  0 \le \lambda
 =\langle u,Cu\rangle
 =\langle u,Qu\rangle
 =\langle Qu,Qu\rangle
 =\|Qu\|^2 \le \|u\|^2 = 1 \).
If $\lambda=0$, then $Qu=0$, and hence $u\in U\cap V^\perp$;
conversely, if $u\in U\cap V^\perp$ then $Cu = PQu=0$.
If $\lambda=1$ then as $\|u\|^2=\|Qu\|^2+\|(I-Q)u\|^2$
and $\|u\|^2=\|Qu\|^2=1$ we have $(I-Q)u=0$, so $Qu=u$ and $u\in U\cap V$. 
Conversely, $C$ is the identity on $U\cap V$.  

Since $C$ is self-adjoint, it preserves $U_*$ and the
spectral theorem  gives an orthonormal eigenbasis $u_1,\ldots,u_r$ 
with eigenvalues  $\lL_1,\ldots,\lL_r$ in $(0,1)$. We define
\[
 v_j=\frac{Qu_j-\lambda_j u_j}{c_js_j}.
\]
Then
\(
 P(Qu_j-\lambda_j u_j)
 =Cu_j-\lambda_j u_j
 =0
\)
and
\(
 \|Qu_j-\lambda_j u_j\|^2
 =
 \|Qu_j\|^2
 -2\lambda_j\langle u_j,Qu_j\rangle
 +\lambda_j^2\|u_j\|^2
 =\lambda_j(1-\lambda_j),
\)
so $v_j$ is a unit vector in $U^\perp$.
Now
\(
 Qu_j=c_j^2u_j+c_js_jv_j
\)
by definition, and applying $Q$ gives
\(
 Qu_j =c_j^2Qu_j+c_js_jQv_j.
\)
Substituting and rearranging gives
\(
 c_js_jQv_j
 =c_j^2s_j^2u_j+c_js_j^3v_j,
\)
so
\(
 Qv_j=c_js_ju_j+s_j^2v_j.
\)
Since also $Pu_j=u_j$ and $Pv_j=0$, both projections preserve
$L_j$, and their matrices are those in \eqref{eq:halmos-block}.

It remains to check for $j \ne k$ that $L_j$ and $L_k$ are orthogonal. 
We already have shown orthogonality between all pairs in $\{u_j,v_j,u_k,v_k\}$
except $(v_j,v_k)$. Note that
\(
 \langle Qu_j,Qu_k\rangle
 =\langle u_j,Qu_k\rangle
 =\lambda_k\langle u_j,u_k\rangle
 =0.
\)
Substituting $Qu_j=\lambda_j u_j+c_js_jv_j$
and $Qu_k=\lambda_k u_k+c_ks_kv_k$
and expanding gives 
\(
 c_js_jc_ks_k\langle v_j,v_k\rangle=0,
\)
so $v_j\perp v_k$.
\end{proof}

\section{Chv\'atal's conjecture}\label{sec:chvatal}

Let $f,g:\OO \to \{0,1\}$ be increasing. Let $\mc F=\{f=1\}$ and $\mc G=\{g=1\}$.
As above, we consider the subspaces $U$ of functions supported on $\mc F=\{f=1\}$ 
and $V$ of functions with Fourier support on $\mc G=\{g=1\}$.
We let $P,Q$ be the orthogonal projections onto $U,V$ and $C=PQP|_U$.

Define the matrix $H$ by $H_{x,S} = N^{-1/2} \chi_S(x)$,
which is the orthogonal change of basis between the standard and Fourier bases.
Write $P_I$ for projection on coordinates $I$.
Then $P=P_{\mc F}$ is diagonal in the standard basis
and  $Q=HP_{\mc G}H$  is diagonal in the Fourier basis.
We have $Q_{y,x}=\frac1N\sum_{S\in\mc G}\chi_S(x)\chi_S(y) =\widehat g(x\triangle y)$.

We connect the energy $\mc E_g(f)$ from the introduction to the Halmos geometry as follows.

\begin{lemma}\label{lem:crossing-mass}
We have $\on{Tr}(C-C^2)=\frac N2\mc E_g(f)$.
\end{lemma}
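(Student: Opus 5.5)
The plan is to compute both traces entry by entry in the standard basis, using only the formula $Q_{y,x}=\widehat g(x\triangle y)$ and the fact that $Q$ is an orthogonal projection. I do not expect to need Proposition~\ref{prop:halmos}, or that $f,g$ are increasing.

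First I would check the formula for $Q$. The space $V$ has orthonormal basis $N^{-1/2}\chi_S$ for $S\in\mc G$. Hence
\[ Q_{y,x}=N^{-1}\sum_{S\in\mc G}\chi_S(x)\chi_S(y)=N^{-1}\sum_S g(S)\chi_{x\triangle y}(S)=\widehat g(x\triangle y), \]
and this is real and symmetric in $x,y$. Since $C=PQP|_U$ and $U$ has the standard basis vectors $e_x$, $x\in\mc F$, the matrix of $C$ is the principal submatrix of $Q$ on $\mc F\times\mc F$. Therefore
\[ \on{Tr}(C)=\sum_{x\in\mc F}Q_{x,x}, \qquad \on{Tr}(C^2)=\sum_{x,y\in\mc F}Q_{x,y}^2. \]

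The key step uses $Q^2=Q=Q^T$. These give $Q_{x,x}=\sum_{y\in\OO}Q_{x,y}^2$ for every $x$. Substituting into $\on{Tr}(C)$ and subtracting, the terms with $y\in\mc F$ cancel, leaving
\[ \on{Tr}(C-C^2)=\sum_{x\in\mc F}\sum_{y\notin\mc F}\widehat g(x\triangle y)^2 .\]
Now I reindex by $T=x\triangle y$, so that $y=x\triangle T$. This gives
\[ \on{Tr}(C-C^2)=\sum_T\widehat g(T)^2\,\#\{x: f(x)=1,\ f(x\triangle T)=0\}. \]
The term $T=\es$ vanishes, because then $y=x$ cannot lie both in $\mc F$ and outside it.

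Finally I would relate the counts to the energy. Since $f$ is $\{0,1\}$-valued, $|f(x)-f(x\triangle T)|^2$ is the indicator that $f(x)\ne f(x\triangle T)$. The involution $x\mapsto x\triangle T$ swaps the two types of disagreement, namely $(f(x),f(x\triangle T))=(1,0)$ and $(0,1)$. So the number of disagreeing $x$ is twice the count above, and hence $\cE_T^{(2)}(f)=\frac2N\#\{x\in\mc F: x\triangle T\notin\mc F\}$. Substituting this gives $\on{Tr}(C-C^2)=\frac N2\sum_{T\ne\es}\widehat g(T)^2\cE^{(2)}_T(f)=\frac N2\mc E_g(f)$.

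The only step needing care is the projection identity $Q_{x,x}=\sum_y Q_{x,y}^2$. It is also the only point where the structure of $V$ enters, and it is what turns the trace difference into a sum over edges crossing the boundary of $\mc F$. Everything else is bookkeeping.
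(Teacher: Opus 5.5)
Your proposal is correct and follows essentially the same route as the paper. The paper packages your identity $Q_{x,x}=\sum_y Q_{x,y}^2$ as $\|(I-P)QP\|_{\mathrm{HS}}^2=\on{Tr}(PQP)-\on{Tr}(PQPQP)$, expands this entrywise as $\sum_{x\in\mc F,\,y\notin\mc F}\widehat g(x\triangle y)^2$, and then reindexes by $T$ and uses $N\cE_T^{(2)}(f)=2|\{x\in\mc F:x\triangle T\notin\mc F\}|$, just as you do.
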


\begin{proof}
Since $P,Q$ are orthogonal projections,
\[
 \|(I-P)QP\|_{\mathrm{HS}}^2
 =\on{Tr}\bigl(PQ(I-P)QP\bigr)
 =\on{Tr}(PQP)-\on{Tr}(PQPQP)
 =\on{Tr}(C-C^2).
\]
On the other hand,
\[
 \|(I-P)QP\|_{\mathrm{HS}}^2
=  \sum_{x\in\mc F,\,y\notin\mc F}\widehat g(x\triangle y)^2
= \sum_T\widehat g(T)^2|\{x\in\mc F:x\triangle T\notin\mc F\}|.
\]
As $N\cE_T^{(2)}(f)=2|\{x\in\mc F:x\triangle T\notin\mc F\}|$ the lemma follows.
\end{proof}

Next we need the dimensions of $U\cap V^\perp$ and $U\cap V$, the $0$- and $1$-eigenspaces of $C$.
We employ the monomial calculation from \cite[Lemma~3.1]{CLL}.
For $S\sub[n]$, let $M_S(x)=\prod_{i\in S}x_i=1_{\{S\sub x\}}$.  
This is supported on the upset generated by $S$.
Its Fourier support lies in subsets of $S$, as
\begin{equation}\label{eq:monomial-expansion}
 M_S=2^{-|S|}\sum_{T\sub S}(-1)^{|T|}\chi_T.
\end{equation}

\begin{prop}\label{prop:endpoints}
We have $\dim(U\cap V^\perp)=|\mc F\sm\mc G|$ and $\dim(U\cap V)=|\mc F\sm\mc G^\dagger|$,
where $\mc G^\dagger=\OO\sm\{\ov S:S\in\mc G\}$.
\end{prop}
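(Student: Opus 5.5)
Both dimension identities come from the same two-part argument. We exhibit an explicit independent family of the right size using the monomials $M_S$ and \eqref{eq:monomial-expansion}. We then bound the dimension from above by a complementary dimension count.

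\emph{Lower bounds.} For $S\in\mc F\sm\mc G$, the monomial $M_S$ is supported on the upset generated by $S$. Since $f$ is increasing, this upset lies in $\mc F$, so $M_S\in U$. By \eqref{eq:monomial-expansion} its Fourier support consists of subsets $T\sub S$. As $g$ is increasing and $S\notin\mc G$, no such $T$ lies in $\mc G$, so $M_S\in V^\perp$. The $M_S$ are linearly independent: order by $|S|$; the function $M_S$ takes value $1$ at $S$, while $M_{S'}(S)=0$ unless $S'\sub S$. This gives $\dim(U\cap V^\perp)\ge|\mc F\sm\mc G|$.

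For $U\cap V$ I would twist the monomials. Let $\widetilde M_S(x)=M_S(\ov x)=1_{\{x\cap S=\es\}}=2^{-|S|}\sum_{T\sub S}\chi_T$, whose Fourier support is also in the subsets of $S$. Consider instead $\chi_{[n]}\cdot(\cdot)$, since multiplying by $\chi_{[n]}$ shifts Fourier support by complementation. The function $\chi_{[n]}\widetilde M_{\ov S}$ has Fourier support in $\{\ov T:T\sub \ov S\}=\{R: R\supseteq S\}$. It lies in $V$ iff every superset of $S$ is in $\mc G$, i.e.\ iff $S\in\mc G$, by monotonicity. Its support is $\{x:x\cap\ov S=\es\}=\{x\sub S\}$, which is the down-set of $S$, not an up-set. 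So I will choose the pairing with the roles reversed: use $\chi_{[n]}M_S$, supported on $\{x\supseteq S\}\sub\mc F$ when $S\in\mc F$, with Fourier support $\{\ov T:T\sub S\}=\{R:R\supseteq\ov S\}$. This lies in $V$ iff $\ov S\in\mc G$, by monotonicity of $g$. The membership conditions are therefore $S\in\mc F$ and $\ov S\in\mc G$. Under the intended convention this is $S\in\mc F\sm\mc G^\dagger$ (taking $\mc G^\dagger$ as the set of $S$ with $\ov S\notin\mc G$, and checking the paper's sign convention carefully). Independence again follows from triangularity, giving the lower bound on $\dim(U\cap V)$.

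\emph{Upper bounds.} This is the main obstacle. The natural route is duality:
\[ \dim(U\cap V^\perp)=\dim U-\dim(U^\perp+V)\text{-type counts}, \]
or more concretely $\dim(U\cap V^\perp)=|\mc F|-\on{rank}(QP)$. So it suffices to show $\on{rank}(QP)\ge|\mc F\cap\mc G|$, i.e.\ exhibit $|\mc F\cap\mc G|$ independent vectors in $U^\perp+V$'s complement pairing. I would apply the same monomial lower-bound construction to the swapped pair of spaces. The pair $(U^\perp,V)$ consists of functions supported on $\OO\sm\mc F$ (a downset) and functions Fourier-supported on $\mc G$. Using the antipodal map $x\mapsto\ov x$ and the Fourier transform $H$, which swaps the roles of the standard and Fourier bases, the construction should produce $\dim(U^\perp\cap V)\ge|\mc G\sm\mc F|$-type bounds, and similarly for $U^\perp\cap V^\perp$. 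Then the identity
\[ \dim(U\cap V)+\dim(U\cap V^\perp)+\dim(U^\perp\cap V)+\dim(U^\perp\cap V^\perp)\le N, \]
together with the four lower bounds summing to exactly $N$ (the four index sets partition $\OO$ twice over, after the appropriate complementations), forces equality throughout. I expect the bookkeeping that the four index sets have total size $N$ to be the delicate point, and I would verify it by writing each count as $|\mc F\cap\cdot|$ or $|\mc F^c\cap\cdot|$ of $\mc G$ or its twisted version. The subspaces are mutually orthogonal or trivially intersecting, so the inequality holds.
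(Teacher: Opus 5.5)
Your lower bounds are correct and match the paper. The fact that $M_S\in U\cap V^\perp$ for $S\in\mc F\sm\mc G$ is the paper's observation that the columns of $R_{\mc G}$ indexed by $\mc F\sm\mc G$ vanish. Your $\chi_{[n]}M_S$ with $S\in\mc F$, $\ov S\in\mc G$ is the paper's twist $J$; since $\ov S\in\mc G$ iff $S\notin\mc G^\dagger$, no sign issue remains.

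The gap is the upper bound, and the four-subspace count cannot work. The spaces $U\cap V$, $U\cap V^\perp$, $U^\perp\cap V$, $U^\perp\cap V^\perp$ are mutually orthogonal, but their sum is generally a proper subspace. By Proposition~\ref{prop:halmos}, its orthogonal complement contains $U_*\oplus\on{span}\{v_1,\dots,v_r\}$, of dimension $2\dim U_*$. This is positive as soon as some vector of $U$ makes an angle strictly between $0$ and $\pi/2$ with $V$, which is exactly the situation exploited in Corollary~\ref{cor:antipodal}. Concretely, take $n=1$ and $f=g=x_1$: then $U$ is spanned by $x_1=(1-\chi_{\{1\}})/2$ and $V$ by $\chi_{\{1\}}$, so all four intersections are $0$ while $N=2$. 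In general the four dimensions sum to $N+2|\mc F|-2|\mc F\cap\mc G|-2|\mc F\cap\mc G^\dagger|=N-2N(\on{Cov}(f,g)+\on{Cov}(f,g^\dagger))$. So no lower bounds can sum to $N$ unless both covariances vanish, and nothing is forced; the bookkeeping you flagged as delicate is false.

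What is missing is a direct rank estimate, and you had already reduced to it. We have $\dim(U\cap V^\perp)=|\mc F|-\on{rank}(QP|_U)$, and $\on{rank}(QP|_U)$ equals the rank of $R_{\mc G}:\phi\mapsto(\widehat\phi(T))_{T\in\mc G}$ on $U$. Use the basis $\{M_S:S\in\mc F\}$ of $U$. The square block of $R_{\mc G}$ with rows and columns indexed by $\mc F\cap\mc G$ has entries $\widehat{M_S}(T)$, which vanish unless $T\sub S$, and its diagonal entries are $(-1)^{|S|}2^{-|S|}\ne0$. Being triangular, this block is invertible, so $\on{rank}R_{\mc G}\ge|\mc F\cap\mc G|$ and $\dim(U\cap V^\perp)\le|\mc F\sm\mc G|$.

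For $U\cap V$, the twist $\phi\mapsto\chi_{[n]}\phi$ maps $U$ onto itself and $U\cap V$ bijectively onto $\ker R_{\mc G^\dagger}$, because $\widehat{\chi_{[n]}\phi}(T)=\widehat\phi(\ov T)$. Since $\mc G^\dagger$ is again increasing, the same rank argument applies verbatim with $\mc G^\dagger$ in place of $\mc G$.
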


\begin{proof}
As $V=\operatorname{span}\{\chi_T:T\in\mc G\}$
we have $\phi \in V^\perp$ if and only if $\widehat\phi(T)=0$ for every $T\in\mc G$.
Thus $U\cap V^\perp=\ker R_{\mc G}$, where for  $\mc H\sub\OO$ we define
\(R_{\mc H}:U\longrightarrow \mb R^{\mc H}\) 
by \(R_{\mc H}(\phi)=\bigl(\widehat\phi(T)\bigr)_{T\in\mc H}\).

To calculate the dimension, we first note that $\{M_S:S\in\mc F\}$ form a basis of $U$,
as $\mc F$ is increasing and $(M_S(T))$ is triangular. In this basis $R_{\mc H}$
has the matrix with $(T,S)$ entry $\widehat{M_S}(T)$.
If $S\in\mc F\sm\mc G$, then the whole column indexed by $S$ is zero,
as $\mc{G}$ does not contain any subsets of $S$, so
\(
 \operatorname{rank}R_{\mc G}\le |\mc F\cap\mc G|.
\)
For the reverse inequality, consider the submatrix
with rows and columns both indexed by $\mc F\cap\mc G$. 
This is triangular with diagonal $ \widehat{M_S}(S)=2^{-|S|}(-1)^{|S|}\ne0$, so
\(
 \operatorname{rank}R_{\mc G}\ge |\mc F\cap\mc G|.
\)
We deduce 
\(
 \operatorname{rank}R_{\mc G}=|\mc F\cap\mc G|,
\)
so
\(
 \dim(U\cap V^\perp)
 =\dim\ker R_{\mc G}
 =|\mc F\sm\mc G|.
\)

To compute $\dim(U\cap V)$ we consider the map
\(
 J\phi=\chi_{[n]}\phi.
\)
Multiplication by $\chi_{[n]}$ does not change support,
so $J$ is a bijection on $U$.
Also  $\widehat{J\phi}(T)=\widehat\phi(\ov T)$,
so $\phi$ has Fourier support in $\mc G$ if
and only if $J\phi$ has Fourier support in $\OO\sm\mc G^\dagger$,
which is equivalent to $R_{\mc G^\dagger}(J\phi)=0$.
Thus $J$ gives a bijection from $U\cap V$ to $\ker R_{\mc G^\dagger}$, and we conclude that
\(
 \dim(U\cap V)
 =\dim\ker R_{\mc G^\dagger}
 =|\mc F\sm\mc G^\dagger|.
\)
\end{proof}

We also need the following simple  properties of $\cE_T^{(p)}(f)=\mb E|f(X)-f(X\triangle T)|^p$.

\begin{lemma}\label{lem:flip-energy}
Let $f:\OO\to\mb R$ be increasing.  For $p=1,2$ and $\es \ne T\sub[n]$ we have
$\cE_T^{(p)}(f)\ge\max_{i\in T}\cE_i^{(p)}(f)$.  Moreover
$\cE_T^{(2)}(f)=4\sum_{|S\cap T|\text{ odd}}\widehat f(S)^2$ and
$\on{Cov}(f,x_i)=\frac14\cE_i^{(1)}(f)$.  If $f$ is Boolean then
$\cE_i^{(1)}(f)=\cE_i^{(2)}(f)=\on{Inf}_i[f]$.
\end{lemma}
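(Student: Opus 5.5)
We prove the four claims in turn, each by a direct computation on $\OO$ in the standard and Fourier bases.

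\emph{Monotonicity in $T$.} Fix $i\in T$ and $x$. The map $x\mapsto x\triangle T$ flips the coordinates of $T$, which we split as flipping $i$ and flipping $T\sm\{i\}$. The idea is to pair up $x$ with a suitable partner so that the pair $\{x,x\triangle T\}$ "dominates" the pair $\{y,y\triangle\{i\}\}$. Set $y=x$ with the coordinates in $T\sm\{i\}$ adjusted so that $y$ agrees with $x$ off $T$ and on $i$. For $x$ with $i\notin x$, let $a = x\cap T$. Then $x$ and $x\triangle T$ have $T$-parts $a$ and $T\sm a$; the first omits $i$ and the second contains it. Consider $z_0=(x\sm T)$ and $z_1=(x\sm T)\cup T$. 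By monotonicity,
\[ f(z_0)\le f(x),\,f(x\triangle T)\le f(z_1), \]
but that does not directly compare to an $i$-edge. Instead use an averaging/matching argument. Write $\OO$ as a product over $T$ and the rest, and condition on the coordinates outside $T$. Then $f$ becomes an increasing function $\phi$ on $2^T$, and the claim reduces to $\mb E|\phi(Y)-\phi(\ov Y)|^p\ge \mb E|\phi(Y)-\phi(Y\triangle\{i\})|^p$ for uniform $Y\sub T$.

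For the reduced claim, pair $Y\not\ni i$ with $Y\cup\{i\}$. The $i$-edge contributes $|\phi(Y\cup i)-\phi(Y)|^p$. We compare this with the antipodal edges $\{Y,\ov Y\}$ and $\{Y\cup i,\ov{Y}\sm i\}$, where both complements are taken within $T$. The four points form a chain-like square: $Y\sub Y\cup i$ and $\ov Y\sm i\sub\ov Y$. The inequality needed is
\[ |b-a|^p\le \tfrac12\bigl(|a-d|^p+|b-c|^p\bigr)\cdot 2 \]
after summing over all $Y$; each antipodal edge is counted appropriately. This pointwise comparison is the main obstacle. I expect it to require using, for each square, that either $c\le b$ or $c\ge a$ and so on, and possibly an averaging over the involution $Y\mapsto \ov Y\sm i$ rather than a pointwise bound. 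I would try first the $p=1$ case by the triangle inequality along monotone paths, then handle $p=2$ by writing the energy spectrally (next claim) and comparing Fourier weights. An alternative for Boolean $f$ is the standard fact that an increasing set's $T$-boundary contains a matching of its $i$-boundary, via the injection $x\mapsto x\triangle(T\sm\{i\})$ restricted appropriately.

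\emph{Fourier formula.} Since $\chi_S(x\triangle T)=\chi_S(x)\chi_S(T)$, we have $f(x)-f(x\triangle T)=\sum_S\widehat f(S)(1-\chi_S(T))\chi_S(x)$. The factor $1-\chi_S(T)$ is $2$ when $|S\cap T|$ is odd and $0$ otherwise. Parseval then gives $\cE_T^{(2)}=4\sum_{|S\cap T|\text{ odd}}\widehat f(S)^2$. This formula also yields the $p=2$ monotonicity whenever the odd-intersection Fourier weight is comparable, which is why I would attack the first claim via the spectral form.

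\emph{Covariance and Boolean case.} $\on{Cov}(f,x_i)=\tfrac14\mb E[f(X)(2X_i-1)]$ up to normalisation. Pairing $X$ with $X\triangle\{i\}$ gives $\tfrac14\mb E|f(X\cup i)-f(X\sm i)|$, which equals $\tfrac14\cE_i^{(1)}$ by monotonicity. For Boolean $f$ the differences lie in $\{0,1\}$, so $|\cdot|=|\cdot|^2$ and both energies equal the probability of a change, namely $\on{Inf}_i[f]$.
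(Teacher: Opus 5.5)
The Fourier formula, the covariance identity and the Boolean case are fine. One slip: $\on{Cov}(f,x_i)=\frac12\mb E[f(X)(2X_i-1)]$, not $\frac14$, though your final value $\frac14\cE_i^{(1)}(f)$ is correct.

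The genuine gap is the first claim, $\cE_T^{(p)}(f)\ge\max_{i\in T}\cE_i^{(p)}(f)$. This is the substantive part of the lemma, and you leave it unresolved. Worse, the square comparison you set up cannot work for $p=2$. With $a=\phi(Y)$, $b=\phi(Y\cup\{i\})$, $c=\phi(\ov Y\sm\{i\})$, $d=\phi(\ov Y)$, one has $(a-d)^2+(b-c)^2-(a-b)^2-(c-d)^2=2(a-c)(b-d)$. Monotonicity gives $a\le b$ and $c\le d$, but says nothing about the signs of $a-c$ and $b-d$. For example, take $T=\{i,j,k\}$ and the increasing $\phi$ equal to $1$ on $\{j\},\{i,j\},\{j,k\}$, to $2$ on $\{i,k\},T$, and to $0$ on $\es,\{i\},\{k\}$. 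The square $\{j\},\{i,j\},\{k\},\{i,k\}$ gives $2(a-c)(b-d)=-2$. The inequality only survives after summing, because the square $\es,\{i\},\{j,k\},T$ gives $+4$.

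Your fallbacks do not close this either:
\begin{itemize}
\item The triangle inequality along a monotone path bounds $|f(x)-f(x\triangle T)|$ from above, which is the wrong direction.
\item Fourier weights alone cannot decide it. The sets $\{S:|S\cap T|\text{ odd}\}$ and $\{S:i\in S\}$ are not nested, and the claim is false without monotonicity: for $f=\chi_{\{i,j\}}$ and $T=\{i,j\}$ one gets $\cE_T^{(2)}(f)=0<4=\cE_i^{(2)}(f)$. You would still need a way to feed monotonicity into the spectral side.
\item A Boolean matching argument, even if completed, would give $p=1$ via the layer-cake decomposition, but not $p=2$ for real-valued $f$.
\end{itemize}

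The missing idea is to go from $T$ down to $\{i\}$ one coordinate at a time, proving $\cE_T^{(p)}(f)\ge\cE_{T\sm\{j\}}^{(p)}(f)$ for each $j\in T\sm\{i\}$. Your square is the right object, but it must be oriented along $j$ and compared with the $(T\sm\{j\})$-flips rather than with single-coordinate edges. For $Y\sub T$ with $j\notin Y$ (complements in $T$, as in your notation), put $a=\phi(Y)$, $b=\phi(Y\cup\{j\})$, $c=\phi(\ov Y\sm\{j\})$, $d=\phi(\ov Y)$. Then $(a,d),(b,c)$ are the $T$-flips and $(a,c),(b,d)$ are the $(T\sm\{j\})$-flips. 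Both $b-a$ and $d-c$ are $j$-increments of an increasing function, so $(a-d)^2+(b-c)^2-(a-c)^2-(b-d)^2=2(b-a)(d-c)\ge0$. For $p=1$, $|a-c|+|b-d|\le|a-d|+|b-c|$ whenever $a\le b$ and $c\le d$, by convexity of $|\cdot|$. Averaging over $Y$ and iterating gives the claim. This is the paper's argument, phrased there as passing from $T$ to $T\cup\{j\}$ with $j\notin T$.

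For $p=1$ alone there is also a one-line proof: $\cE_T^{(1)}(f)\ge\mb E[(f(X)-f(X\triangle T))(2X_i-1)]=\cE_i^{(1)}(f)$, because $X\mapsto X\triangle T$ reverses the sign of $2X_i-1$.
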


\begin{proof}
Fix $j\notin T$ and let $Y$ be a uniform random subset of $[n]\sm\{j\}$.  Given $Y=y$, write
\[
 a=f(y),\qquad b=f(y\cup\{j\}),\qquad
 c=f(y\triangle T),\qquad d=f((y\triangle T)\cup\{j\}).
\]
Since $f$ is increasing, $a\le b$ and $c\le d$.  Conditioning first on $Y$ gives
\[
 \cE_T^{(p)}(f)
 =\mb E_Y\frac{|a-c|^p+|b-d|^p}{2},\qquad
 \cE_{T\cup\{j\}}^{(p)}(f)
 =\mb E_Y\frac{|a-d|^p+|b-c|^p}{2}.
\]
Thus $\cE_{T\cup\{j\}}^{(2)}(f)-\cE_T^{(2)}(f) = \mb{E}_Y (b-a)(d-c) \ge 0$.
Similarly $\cE_{T\cup\{j\}}^{(1)}(f)-\cE_T^{(1)}(f) \ge0$, as the order-preserving
matching $a \lra c$, $b \lra d$ minimises total $L^1$ distance.
Iterating over the coordinates of $T$ gives $\cE_T^{(p)}(f)\ge\max_{i\in T}\cE_i^{(p)}(f)$.

The Fourier formula follows from
\(
 f(x)-f(x\triangle T)
 =2\sum_{|S\cap T|\text{ odd}}\widehat f(S)\chi_S(x)
\)
and Parseval.
Finally, writing $\mu_j=\mb E[f(X)\mid X_i=j]$ for $j=0,1$
we have  $\on{Cov}(f,x_i)= \mb E[f(X)x_i]-\mb E f\mb E x_i
= \frac{\mu_1}{2} - \frac{\mu_0+\mu_1}{4}
= \frac{\mu_1-\mu_0}{4}$.
Here $\cE_i^{(1)}(f)=\mu_1-\mu_0$ by monotonicity
and in the Boolean case $\cE_i^{(1)}(f)=\cE_i^{(2)}(f)=\on{Inf}_i[f]$.
\end{proof}

The Halmos picture now gives the FKKK correlation reformulation of Chv\'atal.

\begin{cor}\label{cor:antipodal}
If $f,g:\OO\to\{0,1\}$ are increasing and $g$ is antipodal then $\on{Cov}(f,g)\ge\frac14\min_i\on{Inf}_i[f]$.
\end{cor}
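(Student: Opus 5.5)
The plan is to prove the stronger inequality $\on{Cov}(f,g)\ge\mc E_g(f)$ and then apply the chain of estimates from the introduction. That chain needs two inputs. First, Parseval gives $\sum_{T\ne\es}\widehat g(T)^2=\mb E g^2-\widehat g(\es)^2=\frac12-\frac14=\frac14$, using $\widehat g(\es)=\mb E g=\frac12$ for antipodal $g$. Second, Lemma \ref{lem:flip-energy} gives $\cE^{(2)}_T(f)\ge\max_{i\in T}\on{Inf}_i[f]\ge\min_i\on{Inf}_i[f]$ for Boolean increasing $f$. Together these yield $\frac14\min_i\on{Inf}_i[f]\le\mc E_g(f)$. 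By Lemma \ref{lem:crossing-mass}, the target inequality is then equivalent to
\[ \on{Tr}(C-C^2)\le \tfrac N2\on{Cov}(f,g)=\tfrac12\bigl(|\mc F\cap\mc G|-\tfrac12|\mc F|\bigr). \]
For the covariance I use $\on{Cov}(f,g)=N^{-1}(|\mc F\cap\mc G|-\frac12|\mc F|)$, since $\mb E g=\frac12$.

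Next I compute the spectral data of $C$. The trace is $\on{Tr}C=\on{Tr}(PQ)=\sum_{x\in\mc F}Q_{x,x}=|\mc F|\,\widehat g(\es)=\frac12|\mc F|$. For the endpoint eigenspaces, antipodality of $g$ means exactly one of $S,\ov S$ lies in $\mc G$. Hence $\{\ov S:S\in\mc G\}=\OO\sm\mc G$, so $\mc G^\dagger=\mc G$. Proposition \ref{prop:endpoints} then gives $\dim(U\cap V)=\dim(U\cap V^\perp)=|\mc F\sm\mc G|=:d$.

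By Proposition \ref{prop:halmos}, the remaining eigenvalues $\lambda_1,\dots,\lambda_r\in(0,1)$ live on $U_*$, where $r=|\mc F|-2d=2|\mc F\cap\mc G|-|\mc F|$. The eigenvalue $1$ contributes $d$ to the trace and the eigenvalue $0$ contributes nothing. Therefore $\sum_j\lambda_j=\frac12|\mc F|-d=\frac r2$. Both endpoint eigenvalues contribute $0$ to $C-C^2$, so $\on{Tr}(C-C^2)=\sum_j(\lambda_j-\lambda_j^2)$.

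The main step is bounding this sum, and Cauchy--Schwarz does it: $\sum_j\lambda_j^2\ge(\sum_j\lambda_j)^2/r=r/4$, so $\on{Tr}(C-C^2)\le\frac r2-\frac r4=\frac r4$ (trivially true if $r=0$). Since $\frac r4=\frac12\bigl(|\mc F\cap\mc G|-\frac12|\mc F|\bigr)$, this is exactly the required bound. The only delicate points are the identity $\mc G^\dagger=\mc G$, which forces the $0$- and $1$-eigenspaces to have equal dimension and hence the mean of the interior eigenvalues to be exactly $\frac12$, and the trace computation via the diagonal $Q_{x,x}=\widehat g(\es)$. Everything else is direct substitution into the earlier results.
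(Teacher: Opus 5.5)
Your proof is correct and follows the paper's argument. You use the same lower bound $\on{Tr}(C-C^2)=\frac N2\mc E_g(f)\ge\frac N8\min_i\on{Inf}_i[f]$ (via Lemma~\ref{lem:crossing-mass}, Lemma~\ref{lem:flip-energy} and Parseval), and the same dimension count $r=|\mc F|-2|\mc F\sm\mc G|=2N\on{Cov}(f,g)$ from $\mc G^\dagger=\mc G$ and Proposition~\ref{prop:endpoints}. The only difference is the last step. The paper bounds $\lambda_j(1-\lambda_j)\le\frac14$ termwise, so your trace computation and Cauchy--Schwarz step are unnecessary here. They do, however, give the exact deficit $\frac r4-\on{Tr}(C-C^2)=\sum_j(\lambda_j-\frac12)^2$, which the paper exploits later in Proposition~\ref{prop:angles}.
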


\begin{proof}
Let $\eta=\min_i\on{Inf}_i[f]$. Applying Lemma~\ref{lem:crossing-mass}, Lemma~\ref{lem:flip-energy} and Parseval gives
\[
 \on{Tr}(C-C^2)=\frac N2\mc E_g(f)
 =\frac N2\sum_{T\ne\es}\widehat g(T)^2\cE_T^{(2)}(f) 
 \ge \frac N2\sum_{T\ne\es}\widehat g(T)^2 \eta = \eta N/8.
\]
On the other hand, antipodality gives $\mc G^\dagger=\mc G$, so Proposition~\ref{prop:endpoints} 
shows that the $1$- and $0$-eigenspaces of $C$ both have dimension $|\mc F\sm\mc G|$.  
Since $\dim U=|\mc F|$ and $|\mc G|=N/2$, the remaining subspace $U_*$ has dimension
\[
 m=|\mc F|-2|\mc F\sm\mc G|
 =2|\mc F\cap\mc G|-|\mc F|
 =2N\on{Cov}(f,g).
\]
By Proposition~\ref{prop:halmos}, 
on $L_j$ the compression $C$ has eigenvalue $\lambda_j=\cos^2\theta_j$, so 
\[
 \on{Tr}(C-C^2)=\sum_{j=1}^m\cos^2\theta_j\sin^2\theta_j
 \le\frac m4=\frac N2\on{Cov}(f,g).
\]
Comparing the bounds gives the result.
\end{proof}

\section{The sharp correlation inequality}\label{sec:sharp-correlation}

In this section we sharpen the analytic approach and thus derive properties
of the equality cases and stability in Chv\'atal's conjecture.
For any Boolean function $r$ on $\OO$, write $r^\dagger(x)=1-r(\ov x)$.  
If $r=1_{\mc R}$, this is the indicator of $\mc R^\dagger=\{x:\ov x\notin\mc R\}$.  
Throughout this section $f,g:\OO\to\{0,1\}$ are increasing, $\mc F=\{f=1\}$, $\mc G=\{g=1\}$, and 
\[
 a=\on{Cov}(f,g),\qquad b=\on{Cov}(f,g^\dagger),\qquad
 W(f,g)=\sum_{T\ne\es}\widehat g(T)^2\max_{i\in T}\on{Inf}_i[f],\qquad
 \eta=\min_i\on{Inf}_i[f].
\]
Note that $a,b\ge0$ by Harris' inequality.
We will sharpen the inequality $W(f,g)\le \frac{2ab}{a+b}$ for $a+b>0$ from \cite{CLL}.

\begin{prop}\label{prop:angles}
Assume $a+b>0$ and put $\bar\lambda=\tfrac{b}{a+b}$. Then
\begin{equation}\label{eq:CLL-deficit}
 \frac{2ab}{a+b}-W(f,g)
 =\frac2N\sum_{j=1}^m(\lambda_j-\bar\lambda)^2
 +\sum_{T\ne\es}\widehat g(T)^2\bigl(\cE_T^{(2)}(f)-\max_{i\in T}\on{Inf}_i[f]\bigr).
\end{equation}
Thus $W(f,g)\le \frac{2ab}{a+b}$, with equality if and only if $\lambda_j=\bar\lambda$ for every $j$ 
and $\cE_T^{(2)}(f)=\max_{i\in T}\on{Inf}_i[f]$ whenever $\widehat g(T)\ne0$.
\end{prop}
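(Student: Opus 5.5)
The plan is to compute $\on{Tr}(C-C^2)$ exactly through the spectral decomposition of Proposition~\ref{prop:halmos}, using only two pieces of data about $C$: its trace and the dimensions of its $0$- and $1$-eigenspaces. Lemma~\ref{lem:crossing-mass} then converts this into $\mc E_g(f)$. The identity \eqref{eq:CLL-deficit} is the equation $\frac2N\on{Tr}(C-C^2)=\mc E_g(f)$, rewritten by completing the square around the mean of the interior eigenvalues.

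First I will identify $m=\dim U_*$. By Proposition~\ref{prop:endpoints},
\[
m=|\mc F|-|\mc F\sm\mc G|-|\mc F\sm\mc G^\dagger|=|\mc F\cap\mc G|+|\mc F\cap\mc G^\dagger|-|\mc F|.
\]
Since $|\mc G^\dagger|=N-|\mc G|$, we have
\[
Na=|\mc F\cap\mc G|-|\mc F||\mc G|/N,\qquad Nb=|\mc F\cap\mc G^\dagger|-|\mc F|(N-|\mc G|)/N.
\]
Adding these gives $m=N(a+b)$.

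Next I will compute $\on{Tr} C$ in two ways. Directly, $\on{Tr} C=\on{Tr}(PQP)=\sum_{x\in\mc F}Q_{x,x}=|\mc F|\,\widehat g(\es)=|\mc F||\mc G|/N$. Spectrally, $\on{Tr} C=\dim(U\cap V)+\sum_j\lambda_j=|\mc F|-|\mc F\cap\mc G^\dagger|+\sum_j\lambda_j$. Comparing the two expressions gives $\sum_{j=1}^m\lambda_j=Nb$. Hence the mean of the $\lambda_j$ is $Nb/m=\bar\lambda$ (this uses $a+b>0$, so $m>0$). The eigenvalues $0$ and $1$ contribute nothing to $\on{Tr}(C-C^2)$. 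Completing the square then gives
\[
\on{Tr}(C-C^2)=\sum_j(\lambda_j-\lambda_j^2)=m\bar\lambda(1-\bar\lambda)-\sum_j(\lambda_j-\bar\lambda)^2=\frac{Nab}{a+b}-\sum_j(\lambda_j-\bar\lambda)^2 .
\]

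By Lemma~\ref{lem:crossing-mass}, the left side equals $\frac N2\mc E_g(f)$. Therefore
\[
\mc E_g(f)=\frac{2ab}{a+b}-\frac2N\sum_j(\lambda_j-\bar\lambda)^2 .
\]
Subtracting $W(f,g)$, and writing $\mc E_g(f)-W(f,g)=\sum_{T\ne\es}\widehat g(T)^2\bigl(\cE_T^{(2)}(f)-\max_{i\in T}\on{Inf}_i[f]\bigr)$, yields \eqref{eq:CLL-deficit}.

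Finally, every term on the right of \eqref{eq:CLL-deficit} is nonnegative. For the sum over $T$, Lemma~\ref{lem:flip-energy} gives $\cE_T^{(2)}(f)\ge\max_{i\in T}\cE_i^{(2)}(f)=\max_{i\in T}\on{Inf}_i[f]$ for Boolean increasing $f$. This yields the inequality $W(f,g)\le\frac{2ab}{a+b}$, and equality holds exactly when each square vanishes and each weighted term with $\widehat g(T)\ne0$ vanishes.

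The only step I expect to need care is the trace bookkeeping. One must check that $\on{Tr} C$ splits as $\dim(U\cap V)$ plus the interior eigenvalues, using Proposition~\ref{prop:halmos}. One must also check that the diagonal entry $Q_{x,x}=\widehat g(\es)=|\mc G|/N$, together with $|\mc G^\dagger|=N-|\mc G|$, produces exactly the covariance $b$.
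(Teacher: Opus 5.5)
Your proposal is correct and follows the paper's proof essentially step for step. You get $m=N(a+b)$ from Proposition~\ref{prop:endpoints}, obtain $\sum_j\lambda_j=Nb$ by comparing $\on{Tr}C=|\mc F||\mc G|/N$ with the spectral decomposition, then complete the square around $\bar\lambda$ and apply Lemma~\ref{lem:crossing-mass}; you also spell out, via Lemma~\ref{lem:flip-energy}, the nonnegativity of the second term that the paper leaves implicit.
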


\begin{proof}
Let $d_0$ and $d_1$ be the multiplicities of the eigenvalues $0$ and $1$ of $C$.  Proposition~\ref{prop:endpoints} gives
\[
 d_0=|\mc F|-|\mc F\cap\mc G|,
 \qquad
 d_1=|\mc F|-|\mc F\cap\mc G^\dagger|.
\]
As $Na=|\mc F\cap\mc G|-\frac{|\mc F||\mc G|}{N}$
and $Nb=|\mc F\cap\mc G^\dagger|-\frac{|\mc F||\mc G^\dagger|}{N}$,
we have $N(a+b)=|\mc F\cap\mc G|+|\mc F\cap\mc G^\dagger|-|\mc F| = |\mc F|-d_0-d_1$,
so $\dim U_*=m=N(a+b)$. For each $x \in \OO$ we have
\(
 Q_{x,x}=\frac1N\sum_{S\in\mc G}\chi_S(x)^2=\frac{|\mc G|}{N}.
\)
so $\on{Tr}C=|\mc F||\mc G|/N$. Subtracting $d_1$ from the $1$-eigenspace gives
\(
 \sum_{j=1}^m\lambda_j
 =\frac{|\mc F||\mc G|}{N}-d_1
 =N b.
\)
Thus $\bar\lambda=(\sum_j\lambda_j)/m=\tfrac{b}{a+b}$.

To prove \eqref{eq:CLL-deficit}, we write the left hand side as
\[
 \frac{2ab}{a+b}-W(f,g)
 =\left(\frac{2ab}{a+b}-\mc E_g(f)\right)
  +\left(\mc E_g(f)-W(f,g)\right).
\]
Here the second bracket is the second term on the right hand side by definition,
while the first equals $\frac2N\sum_{j=1}^m(\lambda_j-\bar\lambda)^2$,
via a short calculation using $m=N(a+b)$, $\sum_j\lambda_j=Nb$ and
$ \mc E_g(f)=\frac2N\sum_{j=1}^m\lambda_j(1-\lambda_j)$ by Lemma~\ref{lem:crossing-mass}
and Proposition~\ref{prop:halmos}. Both terms are nonnegative, proving the inequality and the stated equality criterion.
\end{proof}

In the trivial case $a+b=0$, the proof above shows $m=0$ by dimension, so $b=0$ by trace, and so $a=0$.
Thus $C$ only has eigenvalues $0$ or $1$ and $\mc E_g(f)=0$, so  $W(f,g)=0$.

Now suppose that $g$ is antipodal.  Then $g^\dagger=g$, so $a=b$.  If $a>0$ then Proposition~\ref{prop:angles} with $\bar\lambda=1/2$ gives
\[
 a-W(f,g)=\frac2N\sum_{j=1}^m\left(\lambda_j-\frac12\right)^2
 +\sum_{T\ne\es}\widehat g(T)^2\left(\cE_T^{(2)}(f)-\max_{i\in T}\on{Inf}_i[f]\right).
\]
Also, as $g$ is antipodal, $\sum_{T\ne\es}\widehat g(T)^2=1/4$, so
\[
 W(f,g)-\frac{\eta}{4}
 =\sum_{T\ne\es}\widehat g(T)^2\left(\max_{i\in T}\on{Inf}_i[f]-\eta\right).
\]
Adding these identities gives
\begin{equation}\label{eq:antipodal-deficit}
 \eps:=a-\frac{\eta}{4}
 =\frac2N\sum_{j=1}^m\left(\lambda_j-\frac12\right)^2
 +\sum_{T\ne\es}\widehat g(T)^2\bigl(\cE_T^{(2)}(f)-\eta\bigr).
\end{equation}
This also holds in the trivial case $a=0$, where both sides are zero.

\begin{theo}\label{thm:antipodal-stability}
Let $f,g:\OO\to\{0,1\}$ be increasing, with $g$ antipodal, and use the notation above.
\begin{enumerate}
\item Equality in Corollary~\ref{cor:antipodal} holds if and only if every $\lL_j=1/2$ and $\cE_T^{(2)}(f)=\eta$ whenever $T\ne\es$ and $\widehat g(T)\ne0$.
\item For $\delta>0$, let $J_\delta=\{i:\on{Inf}_i[f]<\eta+\delta\}$.  Then
\begin{equation}\label{eq:fourier-stability}
 \left\|g-\mb E[g\mid X_{J_\delta}]\right\|_2^2
 =\sum_{T\not\sub J_\delta}\widehat g(T)^2\le\frac{\eps}{\delta}.
\end{equation}
Consequently there is an increasing Boolean $J_\delta$-junta $g_\delta$ such that $\mb P(g\ne g_\delta)\le2\eps/\delta$.
\item If  $\gamma=\min_{j:j\ne i}(\on{Inf}_j[f]-\eta)>0$ then $\mb P(g(X)\ne X_i)\le2\eps/\gamma$. 
In particular, $\eps=0$ forces $g(x)=x_i$.
\end{enumerate}
\end{theo}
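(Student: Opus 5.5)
All three parts will be read off the deficit identity \eqref{eq:antipodal-deficit}. Both terms on its right-hand side are nonnegative: $(\lambda_j-\frac12)^2\ge0$, and $\cE_T^{(2)}(f)\ge\max_{i\in T}\on{Inf}_i[f]\ge\eta$ by Lemma~\ref{lem:flip-energy}.

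\emph{Part 1.} Equality in Corollary~\ref{cor:antipodal} means $\eps=0$. Since both terms are nonnegative, this happens exactly when every $\lambda_j=1/2$ and $\cE_T^{(2)}(f)=\eta$ for all $T\ne\es$ with $\widehat g(T)\ne0$. In the trivial case $m=0$ the first condition is vacuous, and the argument is unchanged.

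\emph{Part 2.} Drop the spectral term. For $T\not\sub J_\delta$, pick $i\in T\sm J_\delta$; then $\cE_T^{(2)}(f)\ge\on{Inf}_i[f]\ge\eta+\delta$ by Lemma~\ref{lem:flip-energy}. Hence
\[ \eps\ge\sum_{T\not\sub J_\delta}\widehat g(T)^2\,\delta, \]
and the sum equals $\|g-\mb E[g\mid X_{J_\delta}]\|_2^2$ by Parseval, since conditional expectation onto $X_{J_\delta}$ keeps exactly the Fourier coefficients with $T\sub J_\delta$.

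For the junta, set $\tilde g=\mb E[g\mid X_{J_\delta}]$ and round: $g_\delta=1_{\{\tilde g\ge1/2\}}$. Averaging an increasing function over the other coordinates gives an increasing function, and thresholding preserves monotonicity, so $g_\delta$ is an increasing Boolean $J_\delta$-junta. Wherever $g\ne g_\delta$ we have $|g-\tilde g|\ge1/2$, so Markov gives
\[ \mb P(g\ne g_\delta)\le 4\|g-\tilde g\|_2^2\le 4\eps/\delta. \]
This is a factor $2$ weaker than the claimed $2\eps/\delta$, so I need a sharper rounding bound. Fix the coordinates $z=X_{J_\delta}$, let $p=\tilde g(z)$, and note that $g$ is Boolean. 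The conditional disagreement probability is $\min(p,1-p)$, while the conditional variance is $p(1-p)\ge\frac12\min(p,1-p)$. Therefore $\mb P(g\ne g_\delta)\le 2\|g-\tilde g\|_2^2\le2\eps/\delta$. This rounding step is where the constant must be tracked carefully.

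\emph{Part 3.} Take $\delta=\gamma$. Since $\on{Inf}_j[f]\ge\eta+\gamma$ for all $j\ne i$, we get $J_\gamma\sub\{i\}$. Thus $g_\gamma$ is an increasing Boolean function of $x_i$ alone, so it is $0$, $1$, or $x_i$, and $\mb P(g\ne g_\gamma)\le2\eps/\gamma$. To exclude the constants, use antipodality: $\mb E g=1/2$, while a constant $g_\gamma$ forces $\mb P(g\ne g_\gamma)=1/2$. If $2\eps/\gamma<1/2$, then $g_\gamma=x_i$. Otherwise $2\eps/\gamma\ge1/2$, and $\mb P(g(X)\ne X_i)\le1/2$ directly. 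To verify this last bound, note that $\mb P(g\ne x_i)=\frac12-2\widehat g(\{i\})\cdot(-1)\cdots$, and $\on{Cov}(g,x_i)\ge0$ by Harris, so $\mb P(g\ne x_i)=1-2\mb E[g x_i]\le 1/2$. This handles both cases.

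For $\eps=0$ we get $\mb P(g\ne x_i)=0$, so $g(x)=x_i$. This also needs $\gamma>0$. If $i$ were not a minimiser of $\on{Inf}[f]$, the minimiser $j\ne i$ would give $\gamma=0$, so $\gamma>0$ already forces $\on{Inf}_i[f]=\eta$, and the hypothesis is consistent.

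The main obstacle I expect is getting the constant $2$ in the rounding step and handling the constant-junta case in Part 3. The Fourier inequality \eqref{eq:fourier-stability} itself is immediate.
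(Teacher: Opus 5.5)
Your proposal is correct. Parts 1 and 2 follow the paper's argument essentially verbatim: nonnegativity of both terms in \eqref{eq:antipodal-deficit}, the bound $\cE_T^{(2)}(f)-\eta\ge\delta$ for $T\not\sub J_\delta$, and rounding $\mb E[g\mid X_{J_\delta}]$ at $1/2$ using $\min(p,1-p)\le 2p(1-p)$. Your preliminary Markov bound with constant $4$ can simply be dropped.

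Part 3 takes a different route. The paper computes $\mb P(g\ne X_i)$ directly from Fourier data. With $w=\sum_{T\not\sub\{i\}}\widehat g(T)^2\le\eps/\gamma$, antipodality and Parseval give $\widehat g(\{i\})^2=\frac14-w$, and monotonicity gives $\widehat g(\{i\})\le 0$. Hence $\mb P(g\ne X_i)=w+(\frac12-\sqrt{1/4-w})^2\le 2w$.

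You instead reuse the rounded junta $g_\gamma\in\{0,1,x_i\}$ from Part 2. When $2\eps/\gamma<\frac12$ you exclude the constants, because $\mb Eg=\frac12$ forces $\mb P(g\ne c)=\frac12$ for any constant $c$. In the complementary regime you use the a priori bound $\mb P(g\ne x_i)=1-2\mb E[gx_i]\le\frac12$ from positive correlation. Both cases check out.

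What each approach buys: your route treats Part 2 as a black box and needs no explicit computation with $\widehat g(\{i\})$, at the cost of a case split. The paper's route yields the exact identity $\mb P(g\ne X_i)=\frac12-\sqrt{1/4-w}=\frac{w}{1/2+\sqrt{1/4-w}}$. This shows the true error is about $w$ for small deficits, so the factor $2$ that your argument inherits from rounding is lossy there.

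One cosmetic point: the fragment ``$\frac12-2\widehat g(\{i\})\cdot(-1)\cdots$'' should be replaced by the correct identity $\mb P(g\ne x_i)=\frac12+\widehat g(\{i\})$. Together with $\widehat g(\{i\})\le 0$, this is exactly your Harris bound.
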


\begin{proof}
Statement 1 is immediate as both terms on the right of \eqref{eq:antipodal-deficit} are nonnegative. 

For 2, note that if $T\not\sub J_\delta$, then some $i\in T$ has $\on{Inf}_i[f]\ge\eta+\delta$, 
so $\cE_T^{(2)}(f)-\eta\ge\delta$ by Lemma~\ref{lem:flip-energy} and
 $\sum_{T\not\sub J_\delta}\widehat g(T)^2\le\eps/\delta$
 from the second term of \eqref{eq:antipodal-deficit}.
 Conditional expectation onto the coordinates $J_\delta$ keeps exactly the Fourier coefficients 
 indexed by subsets of $J_\delta$, proving \eqref{eq:fourier-stability}.
We define the increasing  $J_\delta$-junta $g_\delta$ by $g_\delta(x)=1_{\{\pi(x_{J_\delta})\ge1/2\}}$,
where $ \pi(y)=\mb E[g(X)\mid X_{J_\delta}=y]$, i.e.~we round $\pi$ to $0$ or $1$.
This gives conditional error $\min\{\pi(y),1-\pi(y)\} \le 2\pi(y)(1-\pi(y)) = 2\mb E[(g-\pi(y))^2\mid X_{J_\delta}=y]$,
so $\mb P(g\ne g_\delta) \le 2\left\|g-\mb E[g\mid X_{J_\delta}]\right\|_2^2 \le\frac{2\eps}{\delta}$.

For (3), by (2) we have $J_\gamma=\{i\}$ and $w:=\sum_{T\not\sub\{i\}}\widehat g(T)^2\le\eps/\gamma$.  
Antipodality gives $\mb Eg=1/2$, hence Parseval gives $\widehat g(\{i\})^2=1/4-w$.  Also
\(\widehat g(\{i\})=\frac12(\mb E[g\mid X_i=0]-\mb E[g\mid X_i=1])\le0\)
by monotonicity.  Since $X_i=(1-\chi_{\{i\}})/2$, Parseval gives
\(
 \mb P(g\ne X_i)=\|g-X_i\|_2^2=w+\left(\frac12-\sqrt{\frac14-w}\right)^2.
\)
Here $0\le w\le1/4$ and $0\le\frac12-\sqrt{1/4-w}\le2w$, so the squared term is at most $w$.  Therefore $\mb P(g\ne X_i)\le2w\le2\eps/\gamma$.
\end{proof}

\begin{cor}[Extremizers for Chv\'atal]\label{cor:chvatal-equality}
Let $\mc D\subseteq2^{[n]}$ be a downset. Let $M=\max_i|\mc D(i)|$ and $J=\{i:|\mc D(i)|=M\}$.  
If $\mc I\sub\mc D$ is intersecting and $|\mc I|=M$ then $\mc I=\mc D\cap\mc B$ 
for some maximal intersecting $J$-junta $\mc B\subseteq2^{[n]}$.
In particular, if $|J| \le 2$ then any maximum intersecting subfamily of $\mc D$
is one of the largest stars.
\end{cor}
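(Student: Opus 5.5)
Extend $\mc I$ to a maximal intersecting family $\mc B\supseteq\mc I$. Then $\mc I\sub\mc D\cap\mc B$, and $\mc D\cap\mc B$ is intersecting, so Chv\'atal (Corollary~\ref{cor:antipodal}) gives $M=|\mc I|\le|\mc D\cap\mc B|\le M$; hence $\mc I=\mc D\cap\mc B$. It remains to show that $\mc B$ can be replaced by a maximal intersecting $J$-junta without changing $\mc D\cap\mc B$. The reduction to the correlation setting is the one from the introduction: put $f=1-1_{\mc D}$ (increasing) and $g=1_{\mc B}$ (increasing and antipodal). Equality $|\mc D\cap\mc B|=M$ is exactly $\eps=a-\eta/4=0$, since $\on{Cov}(f,g)=N^{-1}(|\mc D|/2-|\mc D\cap\mc B|)$ and $\on{Inf}_i[f]=2N^{-1}(|\mc D|-2|\mc D(i)|)$. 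The second formula also shows that the set of coordinates minimising $\on{Inf}_i[f]$ is exactly $J$.

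Now apply Theorem~\ref{thm:antipodal-stability}(2) with $\eps=0$. Choose $\delta>0$ smaller than the gap between $\eta$ and the second smallest influence, so that $J_\delta=J$. Then \eqref{eq:fourier-stability} gives $\sum_{T\not\sub J}\widehat g(T)^2=0$, so $g=\mb E[g\mid X_J]$ is itself a $J$-junta. This is the point that needs the most care. The stability statement only produces $g_\delta$ with $\mb P(g\ne g_\delta)\le 2\eps/\delta=0$, and for this to give an exact junta we use that both sides are exact identities, so $g=g_\delta$ everywhere. If no second smallest influence exists, then $J=[n]$ and the claim is trivial.

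So $\mc B$ is a maximal intersecting $J$-junta and $\mc I=\mc D\cap\mc B$, which is the first claim.

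For the final sentence, take $|J|\le2$. A maximal intersecting family that is a junta on one coordinate must be a star $\{x:i\in x\}$, since antipodality rules out the constant functions. On two coordinates $\{i,j\}$, the intersecting condition together with antipodality leaves only the increasing antipodal Boolean functions of $(x_i,x_j)$. These are the two dictators $x_i$ and $x_j$: the functions $x_i\wedge x_j$ and $x_i\vee x_j$ are not antipodal, and any other choice fails monotonicity. Hence $\mc B$ is a star centred at some $i\in J$, and $\mc I=\mc D(i)$ with $|\mc D(i)|=M$, i.e.\ $\mc I$ is one of the largest stars.
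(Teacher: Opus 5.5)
Your proposal is correct and follows essentially the paper's argument. The paper reads off from the equality criterion in Theorem~\ref{thm:antipodal-stability}(1), together with Lemma~\ref{lem:flip-energy}, that $\widehat g(T)\ne0$ forces $T\sub J$; you reach the same conclusion $\sum_{T\not\sub J}\widehat g(T)^2=0$ through part (2) with $\eps=0$ and $J_\delta=J$, whose proof rests on exactly that observation. Your derivation of $\mc I=\mc D\cap\mc B$ from $\mc I\sub\mc D\cap\mc B$ and Chv\'atal, and your case check for $|J|\le2$, usefully fill in details the paper leaves implicit. In the one-coordinate case, though, it is monotonicity, not antipodality alone, that rules out $1-x_i$.
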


\begin{proof}
Extend $\mc I$ to a maximal intersecting $\mc B\subseteq2^{[n]}$, and put $f=1-1_{\mc D}$ and $g=1_{\mc B}$.  
Then $\mc I=\mc D\cap\mc B$ and we have equality in Corollary~\ref{cor:antipodal}.
As $\on{Inf}_i[f]=2N^{-1}(|\mc D|-2|\mc D(i)|)$, the minimum-influence coordinates of $f$ are exactly the elements of $J$.
If $T\ne\es$ and $\widehat g(T)\ne0$ then Theorem~\ref{thm:antipodal-stability} and Lemma~\ref{lem:flip-energy} 
gives $\eta=\cE_T^{(2)}(f)\ge\max_{i\in T}\on{Inf}_i[f]$, so $T\sub J$.  Thus $g(x)$ depends only on $x\cap J$,
so $\mc B$ is a $J$-junta. The $|J| \le 2$ conclusion is immediate.
\end{proof}

\section{Kahn's flow conjecture}\label{sec:kahn-reduction}

Let $h:\OO\to\mb R$ be increasing and antipodal. Write $h_+=\max(h,0)$.
For each non-empty $T \sub [n]$ suppose $\lambda_T(i)\ge0$ for $i\in T$ with $\sum_{i\in T}\lambda_T(i)=\widehat h(T)^2$.
Let $\lambda_i=\sum_{T\ni i}\lambda_T(i)$ and $L_\lambda(x)=\sum_i\lambda_i x_i$.
We need to show that  $L_\lambda$ flows upward to $h_+^2$.
We will use the following standard duality,
including the proof for the reader's convenience.

\begin{lemma}\label{lem:flow-cut}
Let $p,q:\OO \to \mb{R}_{\ge 0}$. Then $p$ flows upward to $q$ if and only if 
\(\sum_xp(x)=\sum_xq(x)\) and \(\sum_{x\in\mc D}p(x)\ge\sum_{x\in\mc D}q(x)\) for every downset $\mc D\sub\OO$.
\end{lemma}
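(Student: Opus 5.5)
The plan is to treat this as a bipartite transportation problem and apply max-flow/min-cut (equivalently Hall's theorem, or LP duality). Build a network with a source $s$, a sink $t$, a left copy of $\OO$ and a right copy of $\OO$. Put an arc $s\to x$ of capacity $p(x)$, an arc $y\to t$ of capacity $q(y)$, and an arc $x\to y$ of infinite capacity whenever $x\sub y$. A flow of value $\sum_x p(x)=\sum_y q(y)$ saturates every source and sink arc. Its values $\beta(x,y)$ on the middle arcs are then exactly a witness that $p$ flows upward to $q$, and conversely any such $\beta$ gives a flow of this value.

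\emph{Necessity.} Suppose $\beta$ exists. Summing the two marginal identities over all points gives $\sum_x p(x)=\sum_{x,y}\beta(x,y)=\sum_y q(y)$. For a downset $\mc D$, every $y\in\mc D$ receives flow only from points $x\sub y$, and these lie in $\mc D$. Hence
\[
\sum_{y\in\mc D}q(y)=\sum_{y\in\mc D}\sum_{x\sub y}\beta(x,y)\le\sum_{x\in\mc D}\sum_{y}\beta(x,y)=\sum_{x\in\mc D}p(x).
\]

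\emph{Sufficiency.} Assume the total masses agree and the downset inequalities hold. By max-flow/min-cut it suffices to show every $s$--$t$ cut has capacity at least $\sum_x p(x)$. Take a finite cut with source side $\{s\}\cup A\cup B'$, where $A$ is a set of left vertices and $B'$ a set of right vertices. Finiteness means no middle arc leaves the source side, so $B'$ contains the upset $\mc U(A)$ generated by $A$. Since enlarging $B'$ only increases the cut, we may take $B'=\mc U(A)$. The cut capacity is then
\[
\sum_{x\notin A}p(x)+\sum_{y\in\mc U(A)}q(y).
\]
Replacing $A$ by $\mc U(A)$ only removes terms from the first sum, so we may assume $A=\mc U$ is an upset. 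The capacity becomes $\sum_{x\in\mc D}p(x)+\sum_{y\notin\mc D}q(y)$, where $\mc D=\OO\sm\mc U$ is a downset. This is at least $\sum_{y\in\mc D}q(y)+\sum_{y\notin\mc D}q(y)=\sum_y q(y)=\sum_x p(x)$.

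So the maximum flow equals the common total mass, and it saturates every source and sink arc. Its middle-arc values give $\beta\ge0$, supported on pairs $x\sub y$, with the required marginals.

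The only delicate step is the reduction of an arbitrary cut to one determined by a downset. The key observations there are that infinite capacities force the right side to be upward closed, and that we may then close the left side upward as well. Everything else is standard finite LP/flow theory; real-valued capacities cause no trouble, since max-flow/min-cut holds for real capacities on finite networks.
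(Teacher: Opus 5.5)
Your proposal is correct and follows the paper's proof: the same bipartite network with infinite-capacity arcs on comparable pairs, combined with max-flow/min-cut. The only cosmetic difference is how the cut condition is reduced to downsets. You close the source side of an arbitrary finite cut upward and read off a downset directly. The paper instead quotes the min-cut condition in the Hall form $q(B)\le p(B^\da)$ and then uses $q(B)\le q(B^\da)$.
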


\begin{proof}
Necessity is clear. For sufficiency, we consider a transportation network with source $s$, sink $t$, two copies (left and right) of $\OO$, with the following edges and capacities:
connect $s$ to each left $x$ with capacity $p(x)$, connect each right $y$ to $t$ with capacity $q(y)$, connect each left $x$ and right $y$ with $x \sub y$ with infinite capacity.
Then $p$ flows upward to $q$ if and only if there is a flow of value \(\sum_xp(x)=\sum_xq(x)\).
By max-flow/min-cut, this is exactly when \(q(B)\le p(B^\da)\) for every $B\sub\OO$, where $B^\da=\{x:x\sub y\text{ for some }y\in B\}$.
  As $B^\da$ is decreasing and contains $B$ we have \(q(B)\le q(B^\da)\le p(B^\da)\), as required.
\end{proof}

We will now reduce the proof of Theorem~\ref{thm:kahn} to the following inequality,
which is the general form of the FKKK reformulation of Chv\'atal's conjecture.

\begin{theo}\label{thm:weighted-corr-boolean}
Let $f:\OO\to\{0,1\}$ be increasing and let $h:\OO\to\mb R$ be increasing and antipodal.  Then $\on{Cov}(f,h_+^2)\ge\frac14\mc E_h(f)$.
\end{theo}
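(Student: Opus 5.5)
The plan is to reduce Theorem~\ref{thm:weighted-corr-boolean} to the Boolean Halmos picture of Section~\ref{sec:chvatal} by a layer-cake decomposition of $h$. For $t>0$ put $\mc G_t=\{h>t\}$ and $g_t=1_{\mc G_t}$. These are increasing and nested: $\mc G_t\supseteq\mc G_{t'}$ for $t<t'$. Antipodality gives $\{h<-t\}=\{x:\ov x\in\mc G_t\}$, so $1_{\{h\ge -t\}}=g_t^\dagger$. Hence
\[
 h=\int_0^\infty\bigl(g_t+g_t^\dagger-1\bigr)\,dt,\qquad h_+^2=\int_0^\infty 2t\,g_t\,dt .
\]
Since $g_t^\dagger$ is increasing, Harris' inequality gives $a_t:=\on{Cov}(f,g_t)\ge0$, and $\on{Cov}(f,h_+^2)=\int_0^\infty 2t\,a_t\,dt$.

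On the energy side, I would generalise Lemma~\ref{lem:crossing-mass} to arbitrary $h$. Let $K=H\,\mathrm{diag}(h)\,H$, so that $K_{y,x}=\widehat h(x\triangle y)$. The same computation gives
\[
 \|(I-P)KP\|_{\mathrm{HS}}^2=\frac N2\,\mc E_h(f).
\]
Let $Q_t$ be the projection onto functions with Fourier support in $\mc G_t$, and let $J$ be multiplication by $\chi_{[n]}$. The proof of Proposition~\ref{prop:endpoints} shows that the projection for $\mc G_t^\dagger$ is $J(I-Q_t)J$. Since $J$ is an orthogonal involution commuting with $P$, we get
\[
 (I-P)KP=\int_0^\infty A_t\,dt,\qquad A_t:=(I-P)\bigl(Q_t-JQ_tJ\bigr)P.
\]
The target inequality becomes $\|\int A_t\,dt\|_{\mathrm{HS}}^2\le 2N\int_0^\infty 2t\,a_t\,dt$. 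Expanding the square as $2\iint_{s<t}\langle A_s,A_t\rangle\,ds\,dt$, it suffices to prove the two-level estimate
\[
 \langle A_s,A_t\rangle_{\mathrm{HS}}\le 2N\,a_t\qquad(s<t).
\]
Integrating over $s\in(0,t)$ then produces exactly the weight $2t$.

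Nestedness gives $Q_sQ_t=Q_tQ_s=Q_t$, since both are diagonal in the Fourier basis. I would use this to expand $\langle A_s,A_t\rangle=\on{Tr}\bigl(P(Q_s-JQ_sJ)(I-P)(Q_t-JQ_tJ)P\bigr)$ and try to dominate it by the diagonal quantity at level $t$. At $s=t$ this should reproduce the Boolean bound: $\|A_t\|^2$ equals $\frac N2\cdot 4\mc E_{g_t}(f)$ up to the identification $\widehat{g_t^\dagger}(T)=\pm\widehat{g_t}(T)$ for $T\ne\es$. Proposition~\ref{prop:angles} and Corollary~\ref{cor:antipodal} then control this by the dimension of $U_*$ and the traces $\on{Tr}(PQ_tP)$, which Proposition~\ref{prop:endpoints} computes in terms of $a_t$ and $\on{Cov}(f,g_t^\dagger)$.

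The main obstacle is the off-diagonal case $s<t$. Cauchy--Schwarz only yields $\sqrt{\|A_s\|^2\|A_t\|^2}$, which involves $a_s$ and is not monotone in the right direction. My first attempt would be to work in the Halmos decomposition of the pair $(U,V_t)$, where $V_t$ is the range of $Q_t$. I would view $Q_s-Q_t$ as a projection onto a subspace orthogonal to $V_t$, and show that its contribution to the cross term is nonpositive, or is absorbed by the $U\cap V_t^\perp$ part whose dimension is $|\mc F\sm\mc G_t|$. This would leave only the $U_*$-part at level $t$, which is bounded by $\dim U_*\cdot\frac14$-type estimates, as in Corollary~\ref{cor:antipodal}. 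If this fails, I would instead replace the dimension count by singular value estimates for the weighted operator $(I-P)KP$ directly, as hinted in the introduction.
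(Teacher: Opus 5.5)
Your layer-cake set-up is correct: $h_+^2=\int_0^\infty 2t\,g_t\,dt$ and $(I-P)KP=\int_0^\infty A_t\,dt$. Writing $k_t(x)=g_t(x)-g_t(\ov x)$, one has $\langle A_s,A_t\rangle_{\mathrm{HS}}=\frac N2\sum_T\widehat{k_s}(T)\widehat{k_t}(T)\cE_T^{(2)}(f)$, where $\widehat{k_t}(T)=2\widehat{g_t}(T)$ for $|T|$ odd and $0$ otherwise. However, the two-level estimate $\langle A_s,A_t\rangle\le 2Na_t$ for $s<t$, to which you reduce everything, is \emph{false}, so no level-by-level argument can overcome the off-diagonal obstacle.

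Here is a counterexample. Take $n=3$ and the increasing antipodal $h$ defined by
\[
h=3 \text{ on } \{1,2\},\{1,3\},\{1,2,3\};\quad h=1 \text{ on } \{2,3\};\quad h=-1 \text{ on } \{1\};\quad h=-3 \text{ on } \es,\{2\},\{3\},
\]
and take $f=x_2x_3$. For $0<s<1<t<3$, $\mc G_s$ is the majority family and $\mc G_t=\{\{1,2\},\{1,3\},\{1,2,3\}\}$. On $T=\{1\},\{2\},\{3\},\{1,2,3\}$ we have:
\begin{itemize}
\item $\widehat{k_s}=-\tfrac12,-\tfrac12,-\tfrac12,\tfrac12$;
\item $\widehat{k_t}=-\tfrac34,-\tfrac14,-\tfrac14,\tfrac14$;
\item $\cE_T^{(2)}(f)=0,\tfrac12,\tfrac12,\tfrac12$.
\end{itemize}
Hence $\langle A_s,A_t\rangle=\tfrac34$. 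But $a_t=\tfrac18-\tfrac14\cdot\tfrac38=\tfrac1{32}$, so $2Na_t=\tfrac12<\tfrac34$.

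In this example the theorem is tight: $\on{Cov}(f,h_+^2)=\tfrac14\mc E_h(f)=\tfrac38$. The excess in the cross term is paid for exactly by slack in the diagonal terms, $\|A_s\|^2=\tfrac32<2$ and $\|A_t\|^2=\tfrac38<\tfrac12$, weighted by the interval lengths $1$ and $2$. So the inequality is only copositive as a quadratic form in the level weights, not entrywise. In particular, your plan to show that the $Q_s-Q_t$ part of the cross term is nonpositive cannot work.

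The diagonal case is also not supplied by Sections~3--4. Because the even coefficients cancel in $k_t$,
\[
\|A_t\|^2=2N\sum_{|T|\ \mathrm{odd}}\widehat{g_t}(T)^2\cE_T^{(2)}(f),
\]
not $2N\mc E_{g_t}(f)$. The bound $\|A_t\|^2\le 2Na_t$ is exactly Theorem~\ref{thm:weighted-corr-boolean} for the three-valued $h=k_t$, which is a new case. Proposition~\ref{prop:angles} gives only $\mc E_{g_t}(f)\le 2a_tb_t/(a_t+b_t)$, and this exceeds $a_t$ when $b_t>a_t$. For instance, with $n=2$, $g_t=x_1x_2$ and $f=x_1\lor x_2$, one gets $\mc E_{g_t}(f)=\tfrac3{32}>a_t=\tfrac1{16}$. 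Corollary~\ref{cor:antipodal} does not apply either, since it needs $g_t$ antipodal.

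What is missing is the paper's device for handling all levels simultaneously:
\begin{itemize}
\item extend $\{h>0\}$ to a maximal intersecting family $\mc A$;
\item use the even/odd splitting to write the deficit as $\|K_{\mc E\sm\mc F,\,\mc O\sm\mc F}\|_F^2-\sum_{x\in\mc A\sm\mc F}h_+(x)^2$ for the single weighted operator $L=P_UD|_V$;
\item obtain $s_k(L)\ge h_+(x_k)$ for every $k$ from the nested flag of restricted monomials $m_x\in U\cap V$, each supported where $h_+\ge h_+(x)$.
\end{itemize}
Your fallback sentence points in this direction but contains none of these ingredients.
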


\begin{proof}[Deduction of Theorem~\ref{thm:kahn} from Theorem~\ref{thm:weighted-corr-boolean}]
Suppose $\lambda_T(i)\ge0$ for $i\in T$ with \(\sum_{i\in T}\lambda_T(i)=\widehat h(T)^2\).
Let $\lambda_i=\sum_{T\ni i}\lambda_T(i)$ and $L_\lambda(x)=\sum_i\lambda_i x_i$.
Let $f:\OO\to\mb R$ be increasing.  By subtracting $\min f$ we may assume $f\ge0$. Write $f(x)=\int_0^\infty f_t(x)\,dt$ with $f_t=1_{\{f>t\}}$.  
Each $f_t$ is increasing, so Theorem~\ref{thm:weighted-corr-boolean} gives $\on{Cov}(f_t,h_+^2)\ge\frac14\mc E_h(f_t)$.  
For each $T \sub [n]$, since $f_t$ is Boolean we have
\[  \int_0^\infty \cE_T^{(2)}(f_t)\,dt = \mb E_X\int_0^\infty
   \bigl|1_{\{f(X)>t\}}-1_{\{f(X\triangle T)>t\}}\bigr|^2\,dt
  =  \mb E_X\,|f(X)-f(X\triangle T)| =\cE_T^{(1)}(f). \]
  Lemma~\ref{lem:flip-energy} gives $\on{Cov}(f,x_i)=\cE_i^{(1)}(f)/4$ and $\cE_T^{(1)}(f)\ge\cE_i^{(1)}(f)$ for $i\in T$, so
\[
 \on{Cov}(f,h_+^2)
 \ge\frac14\sum_T\widehat h(T)^2\cE_T^{(1)}(f)
 \ge\frac14\sum_T\sum_{i\in T}\lambda_T(i)\cE_i^{(1)}(f)
 =\on{Cov}(f,L_\lambda).
\]
As $h$ is antipodal, we have $\sum_i\lambda_i =\sum_{T\ne\es}\widehat h(T)^2 =\mb E h^2$ by Parseval,
so $\mb E L_\lambda=\frac12\mb E h^2 = \mb E h_+^2$. Thus we can rewrite the covariance inequality 
as \(\mb E[f h_+^2]\ge\mb E[fL_\lambda].\)
Applying this to $f=1_{\OO\sm\mc D}$ for any downset $\mc D$
gives $\sum_{x\in\mc D}L_\lambda(x)\ge\sum_{x\in\mc D}h_+(x)^2$,
which suffices by Lemma~\ref{lem:flow-cut}.
\end{proof}

\section{The weighted correlation inequality}\label{sec:weighted-correlation}

It remains to prove Theorem~\ref{thm:weighted-corr-boolean}.  
Let $f:\OO\to\{0,1\}$ be increasing and let $h:\OO\to\mb R$ be increasing and antipodal.  
Write $\mc F=\{f=1\}$ and $\mc P=\{x:h(x)>0\}$. Then $\mc P$ is increasing and intersecting.
Extend $\mc P$ to a maximal intersecting family $\mc A$. Then $\mc A$ is increasing and antipodal.

Put $\mc E=\{S:|S|\text{ even}\}$ and $\mc O=\{S:|S|\text{ odd}\}$.  Define the $\mc E\times\mc O$ matrix $K$ by
\[
 K_{S,T}=2\widehat{h_+}(S\triangle T),\qquad S\in\mc E,\ T\in\mc O.
\]
We will prove
\begin{equation}\label{eq:weighted-deficit}
 N\left(\on{Cov}(f,h_+^2)-\frac14\mc E_h(f)\right)
 =\|K_{\mc E\sm\mc F,\,\mc O\sm\mc F}\|_F^2
 -\sum_{x\in\mc A\sm\mc F}h_+(x)^2
\end{equation}
and
\begin{equation}\label{eq:common-flag-bound}
 \|K_{\mc E\sm\mc F,\,\mc O\sm\mc F}\|_F^2
 \ge \sum_{x\in\mc A\sm\mc F}h_+(x)^2.
\end{equation}
Thus the left side of \eqref{eq:weighted-deficit} is nonnegative, which is exactly Theorem~\ref{thm:weighted-corr-boolean}.  We now prove the two ingredients.

Recall the change of basis matrix $H_{x,S} = N^{-1/2} \chi_S(x)$ from Section~\ref{sec:chvatal}. Let
\[
 V_{\mc E}=\sqrt2\,H_{\mc A,\mc E},\qquad
 V_{\mc O}=\sqrt2\,H_{\mc A,\mc O},\qquad
 D=\operatorname{diag}(h_+(x):x\in\mc A),\qquad
 \rho:=\mb E h^2=\frac2N\sum_{x\in\mc A}h_+(x)^2.
\]

\begin{lemma}\label{lem:weighted-matrix}
The matrices $V_{\mc E}$ and $V_{\mc O}$ are orthogonal, with
\(
 K=V_{\mc E}^{\mathsf T}DV_{\mc O}
\) and
\[
 \|K_{\mc E\sm\mc F,\,\mc O\sm\mc F}\|_F^2
 -\|K_{\mc F\cap\mc E,\,\mc F\cap\mc O}\|_F^2
 =\frac{N-2|\mc F|}{2}\rho.
\]
\end{lemma}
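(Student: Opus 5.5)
The plan is to verify the three claims directly from antipodality of $\mc A$ and the parity behaviour of characters. The key fact is that for $x\in\OO$ we have $\chi_S(\ov x)=(-1)^{|S|}\chi_S(x)$. Since $\mc A$ is maximal intersecting, it is antipodal: it contains exactly one of $x,\ov x$ for each $x$, so $|\mc A|=N/2=|\mc E|=|\mc O|$. Hence $V_{\mc E}$ and $V_{\mc O}$ are square.

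\textbf{Orthogonality.} For $S,S'$ of the same parity, $\chi_S\chi_{S'}=\chi_{S\triangle S'}$ with $|S\triangle S'|$ even. An even character takes the same value on $x$ and $\ov x$, so summing over $\mc A$ gives exactly half the sum over $\OO$:
\[
 \sum_{x\in\mc A}\chi_S(x)\chi_{S'}(x)=\tfrac12\sum_{x\in\OO}\chi_{S\triangle S'}(x)=\tfrac N2\,1_{\{S=S'\}}.
\]
With the normalisation $\sqrt2\,N^{-1/2}$ this gives $V_{\mc E}^{\mathsf T}V_{\mc E}=I$ and $V_{\mc O}^{\mathsf T}V_{\mc O}=I$. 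Since both matrices are square, they are orthogonal.

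\textbf{Factorisation.} For $S\in\mc E$ and $T\in\mc O$,
\[
 (V_{\mc E}^{\mathsf T}DV_{\mc O})_{S,T}=\frac2N\sum_{x\in\mc A}h_+(x)\chi_{S\triangle T}(x).
\]
Since $h_+$ is supported on $\mc P\sub\mc A$, the sum may be extended to all of $\OO$. It then equals $2\widehat{h_+}(S\triangle T)=K_{S,T}$.

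\textbf{Frobenius identity.} First compute row and column norms of $K$. By orthogonality of $V_{\mc O}$, we have $KK^{\mathsf T}=V_{\mc E}^{\mathsf T}D^2V_{\mc E}$. Its diagonal entry at $S$ is
\[
 \frac2N\sum_{x\in\mc A}h_+(x)^2\chi_S(x)^2=\rho .
\]
So every row of $K$ has squared norm $\rho$, and by the symmetric computation $K^{\mathsf T}K=V_{\mc O}^{\mathsf T}D^2V_{\mc O}$, so does every column.

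Now split $K$ into the four blocks determined by $\mc E\cap\mc F$, $\mc E\sm\mc F$ (rows) and $\mc O\cap\mc F$, $\mc O\sm\mc F$ (columns). Summing column norms over $\mc O\sm\mc F$ gives
\[
 \|K_{\mc E\sm\mc F,\mc O\sm\mc F}\|_F^2=\rho|\mc O\sm\mc F|-\|K_{\mc E\cap\mc F,\mc O\sm\mc F}\|_F^2 .
\]
Summing row norms over $\mc E\cap\mc F$ gives
\[
 \|K_{\mc E\cap\mc F,\mc O\sm\mc F}\|_F^2=\rho|\mc E\cap\mc F|-\|K_{\mc E\cap\mc F,\mc O\cap\mc F}\|_F^2 .
\]
Combining the two, the desired difference equals $\rho\bigl(|\mc O\sm\mc F|-|\mc E\cap\mc F|\bigr)$. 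Since $|\mc O\sm\mc F|=N/2-|\mc O\cap\mc F|$, this is $\rho(N/2-|\mc F|)=\frac{N-2|\mc F|}{2}\rho$.

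All steps are routine. The only point needing care is the first: recognising that antipodality of $\mc A$ is exactly what makes the restricted character matrices square and orthogonal. The remaining steps are bookkeeping with row and column norms.
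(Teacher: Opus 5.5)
Your proposal is correct and follows essentially the same route as the paper: you use antipodality of $\mc A$ to get square orthogonal character matrices, and extend the sum using the support of $h_+$ to get the factorisation. You then use $KK^{\mathsf T}=V_{\mc E}^{\mathsf T}D^2V_{\mc E}$ to show every row and column of $K$ has squared norm $\rho$. The only cosmetic difference is at the end, where you chain one column-sum and one row-sum identity, whereas the paper adds the row and column sums symmetrically as $(A-B)/2$ so that the off-diagonal blocks cancel; the two computations are equivalent.
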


\begin{proof}
As $\mc A$ is antipodal, $|\mc A|=N/2=|\mc E|=|\mc O|$, so $V_{\mc E}$ and $V_{\mc O}$ are square.  When $|S|,|T|$ have the same parity
we have $\sum_{x\in\mc A}\chi_{S\triangle T}(x)
 =\frac12\sum_{x\in\OO}\chi_{S\triangle T}(x)
 =\frac N2\,1_{\{S=T\}}$. Hence $V_{\mc E}$ and $V_{\mc O}$ are orthogonal.  As $h_+$ vanishes outside $\mc A$, for $S\in\mc E$ and $T\in\mc O$,
\[
 (V_{\mc E}^{\mathsf T}DV_{\mc O})_{S,T}
 =\frac2N\sum_{x\in\mc A}h_+(x)\chi_{S\triangle T}(x)
 =2\widehat{h_+}(S\triangle T)=K_{S,T}.
\]
Since $KK^{\mathsf T}=V_{\mc E}^{\mathsf T}D^2V_{\mc E}$, for every $S\in\mc E$,
\[
 (KK^{\mathsf T})_{S,S}
 =\frac2N\sum_{x\in\mc A}h_+(x)^2\chi_S(x)^2
 =\frac2N\sum_{x\in\mc A}h_+(x)^2=\rho.
\]
Thus every row of $K$ has squared norm $\rho$, and similarly for columns.
For the final identity, we write the left hand side as $(A-B)/2$,
where $A$ is the sum of $\|x\|^2$ over rows $x$ in $\mc E\sm\mc F$ and columns $x$ in $\mc O\sm\mc F$
and $B$ is the sum of $\|x\|^2$ over rows $x$ in $\mc F\cap\mc E$ and columns $x$ in $\mc F\cap\mc O$;
this is valid as the off-diagonal blocks cancel. As $A=(N-|\mc F|)\rho$ and $B=|\mc F|\rho$ the identity follows.
\end{proof}

\paragraph{Proof of \eqref{eq:weighted-deficit}.}
By antipodality $h(x)=h_+(x)-h_+(\ov x)$.  Since
$\chi_T(\ov x)=(-1)^{|T|}\chi_T(x)$ we have
 $\widehat h(T)=(1-(-1)^{|T|})\widehat{h_+}(T)$
which is  $2\widehat{h_+}(T)$ if $|T|$ is odd or $0$ otherwise.
By Parseval  $\rho=\sum_T\widehat h(T)^2
 =4\sum_{T\in\mc O}\widehat{h_+}(T)^2$.

Let $\mc{F}=\{f=1\}$ and $A_T=|\{S\in\mc F:S\triangle T\in\mc F\}|$. 
Then $\cE_T^{(2)}(f)=\frac{2}{N}(|\mc F|-A_T)$. Thus
\[
 \frac N4\mc E_h(f)
 =\frac N4\sum_T\widehat h(T)^2\cE_T^{(2)}(f)\\
 =2\sum_{T\in\mc O}\widehat{h_+}(T)^2(|\mc F|-A_T)\\
 =\frac{|\mc F|}{2}\rho
   -2\sum_{T\in\mc O}\widehat{h_+}(T)^2A_T.
\]
There are $A_T/2$ pairs $(R,S)\in(\mc F\cap\mc E)\times(\mc F\cap\mc O)$ with $R\triangle S=T$, so
\[
 2\sum_{T\in\mc O}\widehat{h_+}(T)^2A_T
 =4\sum_{R\in\mc F\cap\mc E}\sum_{S\in\mc F\cap\mc O}
     \widehat{h_+}(R\triangle S)^2\\
 =\sum_{R\in\mc F\cap\mc E}\sum_{S\in\mc F\cap\mc O}K_{R,S}^2
 =\|K_{\mc F\cap\mc E,\,\mc F\cap\mc O}\|_F^2.
\]
Hence
\(
 \frac N4\mc E_h(f)
 =\frac{|\mc F|}{2}\rho
 -\|K_{\mc F\cap\mc E,\,\mc F\cap\mc O}\|_F^2.
\)
Also, as $h_+$ is supported on $\mc A$ and $\mb E h_+^2=\rho/2$
we have $N\on{Cov}(f,h_+^2) = \sum_{x\in\mc A\cap\mc F}h_+(x)^2-\frac{|\mc F|}{2}\rho$,
so
\(
 N\left(\on{Cov}(f,h_+^2)-\frac14\mc E_h(f)\right)
 =\sum_{x\in\mc A\cap\mc F}h_+(x)^2
 -|\mc F|\rho
 +\|K_{\mc F\cap\mc E,\,\mc F\cap\mc O}\|_F^2.
\)
Substituting from Lemma~\ref{lem:weighted-matrix} gives \eqref{eq:weighted-deficit}.

It remains to prove \eqref{eq:common-flag-bound}. Inside $\mb R^{\mc A}$ let
\[
 U=\operatorname{span}\{V_{\mc E}e_S:S\in\mc E\sm\mc F\},\qquad
 V=\operatorname{span}\{V_{\mc O}e_T:T\in\mc O\sm\mc F\}.
\]

\begin{lemma}\label{lem:common-monomial-flag}
For $x\in\mc A\sm\mc F$, let $M_x:\OO\to\{0,1\}$ be the monomial
\(M_x(z)=1_{\{x\sub z\}}\), and let
\(m_x=M_x|_{\mc A}=(M_x(z))_{z\in\mc A}\in\mb R^{\mc A}\).
Then $m_x\in U\cap V$, the vectors $\{m_x:x\in\mc A\sm\mc F\}$ are linearly independent, and
\[
 m_x(z)\ne0\quad\Longrightarrow\quad h_+(z)\ge h_+(x).
\]
Consequently, for every $t\ge0$,
\(
 W_t=\operatorname{span}\{m_x:x\in\mc A\sm\mc F,\ h_+(x)\ge t\}
\)
is a subspace of $U\cap V$ of dimension
\(|\{x\in\mc A\sm\mc F:h_+(x)\ge t\}|\), and every vector in $W_t$ is supported on coordinates $z\in\mc A$ for which $h_+(z)\ge t$.
\end{lemma}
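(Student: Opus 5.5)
The plan is to check each claim of the lemma directly, using that $V_{\mc E}$ and $V_{\mc O}$ are orthogonal (Lemma~\ref{lem:weighted-matrix}). The one genuinely non-trivial claim is $m_x\in U\cap V$; the rest is triangularity and monotonicity.

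\emph{Reducing $m_x\in U\cap V$ to a vanishing statement.} Since $V_{\mc E}$ is orthogonal, its columns form an orthonormal basis of $\mb R^{\mc A}$. Hence $U$ is exactly the orthogonal complement of $\operatorname{span}\{V_{\mc E}e_S:S\in\mc E\cap\mc F\}$, and likewise for $V$ with $\mc O$. So it suffices to show
\[
\sum_{z\in\mc A}M_x(z)\chi_S(z)=0\quad\text{for every }S\in\mc F.
\]
This single statement covers both the even and the odd case.

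\emph{Proving the vanishing.} Let $x\in\mc A$ and $x\sub z$. Then $z\in\mc A$, because $\mc A$ is increasing. So $M_x$ is supported inside $\mc A$, and the sum over $\mc A$ equals the sum over all of $\OO$, namely $N\widehat{M_x}(S)$. By \eqref{eq:monomial-expansion}, $\widehat{M_x}(S)\ne0$ only when $S\sub x$. But $S\in\mc F$ with $\mc F$ increasing would then force $x\in\mc F$, contradicting $x\in\mc A\sm\mc F$. Hence the sum vanishes. I expect this observation, that $M_x$ lives inside $\mc A$ so restriction loses nothing, to be the only real point of the argument.

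\emph{Remaining claims.} Linear independence follows from triangularity. We have $m_x(x')=1$ if $x\sub x'$ and $0$ otherwise, and $m_x(x)=1$. Ordering $\mc A\sm\mc F$ by size, the matrix $(m_x(x'))$ restricted to $x,x'\in\mc A\sm\mc F$ is unitriangular, so the $m_x$ are independent. For the support property, $m_x(z)\ne0$ means $x\sub z$. Since $h$ is increasing, $h(z)\ge h(x)$, and therefore $h_+(z)\ge h_+(x)$. The statements about $W_t$ are now immediate. $W_t$ is spanned by independent vectors lying in $U\cap V$, which gives its dimension. Each spanning vector $m_x$ with $h_+(x)\ge t$ is supported on coordinates $z$ with $h_+(z)\ge h_+(x)\ge t$, and a linear combination of vectors with this support property has it too.
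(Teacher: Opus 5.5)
Your proof is correct, but it shows $m_x\in U\cap V$ from the dual side, whereas the paper argues directly with spans.

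\textbf{The paper's route.} It splits $M_x=M_x^{\mc E}+M_x^{\mc O}$ into even and odd Fourier parts. Each part has Fourier support in $\{S\sub x\}$, which avoids $\mc F$. Hence $M_x^{\mc E}|_{\mc A}\in U$ and $M_x^{\mc O}|_{\mc A}\in V$ simply by the definition of $U,V$ as spans. It then uses that $\mc A$ is intersecting to get $M_x(\ov z)=0$ for $z\in\mc A$. This gives $M_x^{\mc E}(z)=M_x^{\mc O}(z)=\tfrac12 M_x(z)$ on $\mc A$, i.e.\ $m_x=2M_x^{\mc E}|_{\mc A}=2M_x^{\mc O}|_{\mc A}$.

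\textbf{Your route.} You use the orthogonality of $V_{\mc E}$ and $V_{\mc O}$ from Lemma~\ref{lem:weighted-matrix} to describe $U$ and $V$ as orthogonal complements of the restricted characters indexed by $\mc F$. You then use that $\mc A$ is increasing, so $M_x$ is supported in $\mc A$. This turns the restricted inner products into full Fourier coefficients $N\widehat{M_x}(S)$, which vanish for $S\in\mc F$.

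\textbf{How they relate.} By antipodality, ``$M_x$ is supported in $\mc A$'' and ``$M_x(\ov z)=0$ for all $z\in\mc A$'' are the same fact. Expanding $m_x$ in your orthonormal basis recovers exactly the paper's coefficients $2\widehat{M_x}(S)$. So the two arguments are dual versions of each other.

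\textbf{Trade-offs.} Your version handles the even and odd cases in one vanishing statement and avoids the even/odd symmetrization identity. The paper's version exhibits $m_x$ explicitly as an element of the span. It therefore needs neither the orthogonality of $V_{\mc E},V_{\mc O}$ nor, behind it, the fact that $|\mc A|=N/2$.

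Your linear independence (unitriangularity) and support (monotonicity of $h_+$) arguments coincide with the paper's.
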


\begin{proof}
By \eqref{eq:monomial-expansion}, each $M_x$ has Fourier support in subsets of $x$,
so is disjoint from $\mc F$ if $x\notin\mc F$, as $\mc F$ is increasing. Let
\(M_x^{\mc E}=\sum_{S\in\mc E}\widehat{M_x}(S)\chi_S \) and
\(M_x^{\mc O}=\sum_{S\in\mc O}\widehat{M_x}(S)\chi_S\).
Then $M_x^{\mc E}|_{\mc A} \in U$ and  $M_x^{\mc O}|_{\mc A}\in V$.
For any $x\in\mc A\sm\mc F$ and $z\in\mc A$, as $\mc{A}$ is intersecting we have $M_x(\ov z)=0$, 
so $M_x^{\mc E}(z)=M_x^{\mc O}(z)=M_x(z)/2$, and so  $m_x\in U\cap V$. As before,
they are linearly independent as their matrix on $\mc A\sm\mc F$ is triangular with diagonal $1$.
If $m_x(z)\ne0$, then $x\sub z$ so monotonicity of $h_+$ gives $h_+(z)\ge h_+(x)$. 
The remaining assertions follow.
\end{proof}

\paragraph{Proof of \eqref{eq:common-flag-bound}.}
Let $P_U$ denote orthogonal projection onto $U$, and set $L=P_UD|_V:V\to U$.  By Lemma~\ref{lem:weighted-matrix}, the vectors $\{V_{\mc E}e_S:S\in\mc E\sm\mc F\}$ and $\{V_{\mc O}e_T:T\in\mc O\sm\mc F\}$ are orthonormal bases of $U$ and $V$, and the matrix of $L$ in these bases is exactly
\(K_{\mc E\sm\mc F,\,\mc O\sm\mc F}\), with $(S,T)$ entry 
\(
 \langle V_{\mc E}e_S,DV_{\mc O}e_T\rangle
 =(V_{\mc E}^{\mathsf T}DV_{\mc O})_{S,T}=K_{S,T}.
\)

Order $\mc A\sm\mc F$ as $x_1,\ldots,x_d$ so that
$h_+(x_1)\ge\cdots\ge h_+(x_d)$.  By Lemma~\ref{lem:common-monomial-flag},
$W_{h_+(x_k)}\sub U\cap V$ has dimension at least $k$ and is supported where $D\ge h_+(x_k)I$.  
Hence for every nonzero $w\in W_{h_+(x_k)}$ we have
\[
 \|Lw\|\,\|w\|\ge\langle Lw,w\rangle
 =\langle P_UDw,w\rangle
 =\langle Dw,w\rangle
 \ge h_+(x_k)\|w\|^2,
\]
The max--min formula for singular values $s_1(L)\ge s_2(L)\ge\cdots$ and Lemma~\ref{lem:common-monomial-flag} give
\[
 s_k(L)=\max_{\dim E=k}\;\min_{0\ne w\in E}\frac{\|Lw\|}{\|w\|} \ge h_+(x_k), 
 \ \text{ so we deduce \eqref{eq:common-flag-bound} by}
\]
\[
 \|K_{\mc E\sm\mc F,\,\mc O\sm\mc F}\|_F^2
 =\|L\|_F^2
 =\sum_k s_k(L)^2
 \ge\sum_{k=1}^d h_+(x_k)^2
 =\sum_{x\in\mc A\sm\mc F}h_+(x)^2.
\]

\subsection*{Statement on AI use}

This proof was found by GPT-6 Astra, following an approach suggested by the author,
for which \cite{CLL} provided the missing piece of the puzzle.
The author simplified and rewrote the proof.

\end{document}